\newcounter{q3}
\newcommand{\dsm}[3]{
{\if#20{\if#31{\frac{\partial #1}{\partial y}}\else
          {\frac{\partial^{#3} #1}{\partial y^{#3}}}
        \fi}\else
  {\if#30{\if#21{\frac{\partial #1}{\partial x}}\else
            {\frac{\partial^{#2} #1}{\partial x^{#2}}}
          \fi}\else
    {\setcounter{q3}{#2}\addtocounter{q3}{#3}
    \frac{\partial{\if{1}\arabic{q3}^{}\else^{ \arabic{q3} }\fi}#1}
    {{\if#20\else{\partial x{\if#21\else{^{#2}}\fi}}\fi}
     {\if#30\else{\partial y\if#31\else{^{#3}}\fi}\fi} }}
   \fi}
\fi} }
\newcommand{\dzm}[2]{
{\if#10{\if#21{\frac{\partial}{\partial\overline{\zeta}}}\else
          {\frac{\partial^{#2}}{\partial\overline{\zeta}^{#2}}}
        \fi}\else
  {\if#20{\if#11{\frac{\partial}{\partial\zeta}}\else
            {\frac{\partial^{#1}}{\partial\zeta^{#1}}}
          \fi}\else
    {\setcounter{q3}{#1}\addtocounter{q3}{#2}
    \frac{\partial{\if{1}\arabic{q3}^{}\else^{ \arabic{q3} }\fi}}
    {{\if#10\else{\partial\zeta{\if#21\else{^{#1}}\fi}}\fi}
     {\if#20\else{\partial\overline{\zeta}\if#21\else{^{#2}}\fi}\fi} }}
   \fi}
\fi} }
\newcounter{q2}
\newtheoremstyle{theor}
  {\medskipamount}
  {\medskipamount}
  {\itshape}
  {}
  {\bfseries}
  {.}
  {.5em}
  {}
\newtheorem{definition}{Definition}[section]
\newtheorem{theorem}[definition]{Theorem}
\newtheorem{lemma}[definition]{Lemma}
\newtheorem{proposition}[definition]{Proposition}
\newtheorem{corollary}[definition]{Corollary}
\theoremstyle{definition}
\newtheorem{remark}[definition]{Remark}
\numberwithin{equation}{section}
\newtheoremstyle{remarks}
  {0mm}
  {0mm}
  {\itshape}
  {}
  {\itshape}
  {.}
  {.5em}
  {}
\makeatletter \@addtoreset{equation}{section} \makeatother
\begin{document}

\subsection*{\center ISOMETRIC EMBEDDINGS OF PRETANGENT SPACES  IN $ E^n$}\begin{center}\textbf{V. Bilet and O. Dovgoshey  } \end{center}
\parshape=5
1cm 14.5cm 1cm 14.5cm 1cm 14.5cm 1cm 14.5cm 1cm 14.5cm \noindent \small {\bf Abstract.}
  We prove some infinitesimal analogs of classical results of Menger, Schoenberg and
Blumenthal giving the existence conditions for isometric embeddings of metric spaces in
the finite-dimensional Euclidean spaces.

\parshape=2
1cm 14.5cm 1cm 14.5cm  \noindent \small {\bf Key words:} metric space, pretangent space,
isometric embedding, infinitesimal geometry of metric spaces, Cayley-Menger determinant.

 \bigskip
\textbf{ AMS 2010 Subject Classification: 54E35}

\large \section {Introduction }\hspace*{\parindent} The definition of pretangent and
tangent metric spaces to an arbitrary metric space was introduced in \cite{DM} for
studies of generalized differentiation on metric spaces. The development of this theory
requires the understanding of interrelations between the infinitesimal properties of
initial metric space and  geometry of pretangent spaces to this initial.

The necessary and sufficient conditions under which a pretangent space to metric space
is unique  and  a series of interesting examples of metric spaces with unique pretangent
spaces were presented in \cite{DAKM}.  Some conditions under which pretangent spaces are
compact and bounded were found in a recent paper \cite{DAKC}. Criteria of the
ultrametricity of pretangent spaces were obtained in \cite{DD2} and \cite{DD1} . The
necessary and sufficient conditions under which subspaces $X$ and $Y$ of metric space
$Z$ have the same pretangent spaces in a point of $X\cap Y$ were obtained in \cite{Dov}.
A criterion of the finitness of pretangent spaces was proved in \cite{DM}.

Our main goal is to search the criteria of the isometric embeddability of pretangent
spaces in the real $n-$dimensional Euclidean space $E^n.$ The second part of our paper
contains the general Transfer Principle, Theorem~\ref{Th1.7}, providing, in some cases,
the "automatic translation" of global properties of pretangent spaces into the limits
relations defened in the initial metric spaces. An immediate consequence of the Transfer
Principle is the Conservation Principle describing some properties of metric spaces
which are invariant under passage to the pretangent spaces. In the third part of the
paper we apply the Transfer Principle to the classical condition of isometric
embeddability of metric spaces in $E^{n}$ obtained by K. Menger and I. Schoenberg. We
reformulate their embedding theorems in a suitable form, see Proposition~\ref{main
prop.} and Proposition~\ref{p2.11} and transfer them to the "infinitesimal" embeddings
theorems \ref{Embed} and \ref{Embed*}. In the fourth part we obtain
Theorem~\ref{main.Blum.} which gives the infinitesimal form of Blumenthal's embedding
theorem. Note that in the last case the Transfer Principle do not seem to be applicable.

\section{Pretangent spaces}\hspace*{\parindent}
For convenience we recall the terminology that will be necessary in future.

Let $(X,d)$ be a metric space and let $p$ be a point of $X.$ Fix some sequence
$\tilde{r}$ of positive real numbers $r_n$ tending to zero. In what follows $\tilde{r}$
will be called a \emph{normalizing sequence}. Let us denote by $\tilde{X}$ the set of
all sequences of points from X.
\begin{definition}\label{def:1.1} Two sequences $\tilde{x}=\{x_n\}_{n\in \mathbb N}$ and $\tilde{y}=\{y_n\}_{n\in \mathbb
N},$ $\tilde{x}, \tilde{y} \in \tilde{X}$ are mutually stable with respect to
$\tilde{r}=\{r_n\}_{n\in \mathbb N}$ if there is a finite limit
\begin{equation}\label{eq1.1}
\lim_{n\to\infty}\frac{d(x_n,y_n)}{r_n}:=\tilde{d}_{\tilde{r}}(\tilde{x},\tilde{y})=\tilde{d}(\tilde{x},\tilde{y}).\end{equation}\end{definition}
We shall say that a family $\tilde{F}\subseteq\tilde{X}$ is \emph{self-stable} (w.r.t.
$\tilde{r}$) if every two $\tilde{x}, \tilde{y} \in \tilde{F}$ are mutually stable. A
family $\tilde{F}\subseteq\tilde{X}$ is \emph{maximal self-stable} if $\tilde{F}$ is
self-stable and for an arbitrary $\tilde{z}\in \tilde{X}$ either $\tilde{z}\in\tilde{F}$
or there is $\tilde{x}\in\tilde{F}$ such that $\tilde{x}$ and $\tilde{z}$ are not
mutually stable.

The standart application of Zorn's Lemma leads to the following
\begin{proposition}\label{prop:1.1}Let $(X,d)$ be a metric space and let $p \in X.$ Then for every normalizing sequence $\tilde{r}=\{r_n\}_{n\in \mathbb N}$ there exists a maximal self-stable family $\tilde{X}_p=\tilde{X}_{p,\tilde{r}}$ such that $\tilde{p}:=$ $=\{p,p,...\}\in\tilde{X}_p.$
\end{proposition}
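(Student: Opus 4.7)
The plan is a routine application of Zorn's Lemma to the poset of self-stable families containing $\tilde{p}$, ordered by inclusion.

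First I would introduce the collection
\[
\mathcal{P} := \{\tilde{F} \subseteq \tilde{X} : \tilde{F} \text{ is self-stable w.r.t.\ } \tilde{r} \text{ and } \tilde{p} \in \tilde{F}\},
\]
partially ordered by set inclusion. To see that $\mathcal{P}$ is nonempty, observe that the one-element family $\{\tilde{p}\}$ belongs to $\mathcal{P}$: the limit $\lim_{n\to\infty} d(p,p)/r_n = 0$ exists, so $\tilde{p}$ is mutually stable with itself.

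Next I would verify the chain condition required by Zorn's Lemma. Let $\mathcal{C} \subseteq \mathcal{P}$ be a totally ordered subfamily and set $\tilde{F}^{*} := \bigcup_{\tilde{F} \in \mathcal{C}} \tilde{F}$. Clearly $\tilde{p} \in \tilde{F}^{*}$. To confirm self-stability, pick any $\tilde{x}, \tilde{y} \in \tilde{F}^{*}$; then $\tilde{x} \in \tilde{F}_1$ and $\tilde{y} \in \tilde{F}_2$ for some $\tilde{F}_1, \tilde{F}_2 \in \mathcal{C}$, and since $\mathcal{C}$ is a chain one of $\tilde{F}_1, \tilde{F}_2$ contains the other; in the larger one both $\tilde{x}$ and $\tilde{y}$ sit, so by self-stability of that member the limit \eqref{eq1.1} exists. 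Thus $\tilde{F}^{*} \in \mathcal{P}$ is an upper bound for $\mathcal{C}$.

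By Zorn's Lemma, $\mathcal{P}$ possesses a maximal element, which I call $\tilde{X}_{p}$. It remains to check that $\tilde{X}_{p}$ is maximal self-stable in the sense of Definition~\ref{def:1.1}, not merely maximal inside $\mathcal{P}$. Suppose, toward contradiction, that there is $\tilde{z} \in \tilde{X} \setminus \tilde{X}_{p}$ such that $\tilde{z}$ is mutually stable with every $\tilde{x} \in \tilde{X}_{p}$. Then $\tilde{X}_{p} \cup \{\tilde{z}\}$ is self-stable and still contains $\tilde{p}$, hence lies in $\mathcal{P}$, strictly larger than $\tilde{X}_{p}$, contradicting maximality.

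There is really no serious obstacle here; the only point that merits care is ensuring that self-stability is preserved under chain unions, which succeeds precisely because mutual stability is a property of pairs of sequences. Everything else is the standard Zorn's Lemma boilerplate.
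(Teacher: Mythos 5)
Your argument is correct and is exactly the ``standard application of Zorn's Lemma'' that the paper invokes without writing out: the poset of self-stable families containing $\tilde{p}$, closure under chain unions because mutual stability is a pairwise condition, and the final check that maximality in the poset coincides with the paper's notion of a maximal self-stable family. Nothing further is needed.
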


Note that the condition $\tilde{p}\in\tilde{X}_p$ implies the equality
\begin{equation}\label{eq1.1*}\lim_{n\to\infty}d(x_n,p)=0 \end{equation} for every $\tilde{x}=\{x_n\}_{n\in \mathbb N}\in\tilde{X}_p.$

Consider a function $\tilde{d}:\tilde{X}_p\times\tilde{X}_p\rightarrow\mathbb R$ where
$\tilde{d}(\tilde{x},\tilde{y})=\tilde{d}_{\tilde{r}}(\tilde{x},\tilde{y})$ is defined
by \eqref{eq1.1}. Obviously, $\tilde{d}$ is symmetric and nonnegative. Moreover, the
triangle inequality for $d$ implies
$$\tilde{d}(\tilde{x},\tilde{y})\leq\tilde{d}(\tilde{x},\tilde{z})+\tilde{d}(\tilde{z},\tilde{y})$$
for all $\tilde{x},\tilde{y},\tilde{z}$ from $\tilde{X}_p.$ Hence
$(\tilde{X}_p,\tilde{d})$ is a pseudometric space.
\begin{definition}\label{def:1.2} The pretangent space to the space X (at the point p w.r.t. $\tilde{r}$) is the metric identification of the pseudometric space
$(\tilde{X}_{p,\tilde{r}},\tilde{d}).$\end{definition}

Since the notion of pretangent space is important for the present paper, we remind this
metric identification construction.

Define the relation $\sim$ on $\tilde X_p$ by $\tilde x\sim \tilde y$ if and only if
$\tilde d(\tilde x, \tilde y)=0.$ Then $\sim$ is an equivalence relation. Let us denote
by $\Omega_{p,\tilde r}^{X}$ the set of equivalence classes in $\tilde X_p$ under the
equivalence relation $\sim.$ It follows from general properties of pseudometric spaces,
see for example, \cite{Kelley}, that if $\rho$ is defined on $\Omega_{p,\tilde r}^{X}$
by \begin{equation} \label{eq1.2}\rho(\alpha,\beta):=\tilde d (\tilde x, \tilde
y)\end{equation}for $\tilde x\in \alpha$ and $\tilde y\in \beta,$ then $\rho$ is a
well-defined metric on $\Omega_{p,\tilde r}^{X}.$ By definition the metric
identification of $(\tilde X_p, \tilde d)$ is the metric space $(\Omega_{p,\tilde
r}^{X}, \rho).$

Remark that $\Omega_{p,\tilde r}^{X}\ne \varnothing$ because the constant sequence
$\tilde p$ belongs to $\tilde{X}_{p,\tilde{r}},$ see Proposition~\ref{prop:1.1}.

Let $\{n_k\}_{k\in\mathbb N}$ be an infinite, strictly increasing sequence of natural
numbers. Let us denote by $\tilde r'$ the subsequence $\{r_{n_k}\}_{k\in \mathbb N}$ of
the normalizing sequence $\tilde r=\{r_n\}_{n\in\mathbb N}$ and let $\tilde x':=$
$=\{x_{n_k}\}_{k\in\mathbb N}$ for every $\tilde x=\{x_n\}_{n\in\mathbb N}\in\tilde X.$
It is clear that if $\tilde x$ and $\tilde y$ are mutually stable w.r.t. $\tilde r,$
then $\tilde x'$ and $\tilde y'$ are mutually stable w.r.t. $\tilde r'$ and
\begin{equation*}\tilde d_{\tilde r}(\tilde x, \tilde y)=\tilde d_{\tilde r'}(\tilde x', \tilde
y').\end{equation*} If $\tilde X_{p,\tilde r}$ is a maximal self-stable (w.r.t $\tilde
r$) family, then, by Zorn's Lemma, there exists a maximal self-stable (w.r.t $\tilde
r'$) family $\tilde X_{p,\tilde r'}$ such that $$\{\tilde x':\tilde x \in \tilde
X_{p,\tilde r}\}\subseteq \tilde X_{p,\tilde r'}.$$ Denote by $in_{\tilde r'}$ the map
from $\tilde X_{p,\tilde r}$ to $\tilde X_{p,\tilde r'}$ with $in_{\tilde r'}(\tilde
x)=\tilde x'$ for all $\tilde x\in\tilde X_{p,\tilde r}.$ It follows from \eqref{eq1.2}
that after metric identifications $in_{\tilde r'}$ pass to an isometric embedding
$em':\Omega_{p,\tilde r}^{X}~\rightarrow~\Omega_{p,\tilde r'}^{X}$ under which the
diagram
\begin{equation} \label{eq1.3}
\begin{array}{ccc}
\tilde X_{p, \tilde r} & \xrightarrow{\ \ \mbox{\emph{in}}_{\tilde r'}\ \ } &
\tilde X_{p, \tilde r^{\prime}} \\
\!\! \!\! \!\! \!\! \! \pi\Bigg\downarrow &  & \! \!\Bigg\downarrow \pi^{\prime}
\\
\Omega_{p, \tilde r}^{X} & \xrightarrow{\ \ \mbox{\emph{em}}'\ \ \ } & \Omega_{p, \tilde
r^{\prime}}^{X}
\end{array}
\end{equation}is commutative. Here $\pi$ and
$\pi'$ are the natural projections, $\pi(\tilde x):=\{\tilde y \in \tilde X_{p,\tilde
r}: \tilde d_{\tilde r}(\tilde x, \tilde y)=0\}$ and $\pi'(\tilde x):=\{\tilde y \in
\tilde X_{p,\tilde r'}: \tilde d_{\tilde r'}(\tilde x, \tilde y)=0\}.$

Let $X$ and $Y$ be two metric spaces. Recall that the map $f:X\rightarrow Y$ is called
an \emph{isometry} if $f$ is distance-preserving and onto.

\begin{definition}\label{def:1.3}A pretangent $\Omega_{p,\tilde
r}^{X}$ is tangent if $em':\Omega_{p,\tilde r}^{X}\rightarrow \Omega_{p,\tilde r'}^{X}$
is an isometry for every~$\tilde r'.$\end{definition}

\begin{remark}\label{rem 1.1} Let $\tilde X_{p,\tilde r}$ be a maximal self-stable family with corresponding pretangent space $\Omega_{p,\tilde r}^{X}.$ Then  $\Omega_{p,\tilde r}^{X}$ is tangent if and only if for every subsequence $\tilde r'=\{r_{n_k}\}_{k\in\mathbb N}$ of the sequence $\tilde r$ the family $\tilde X_{p, \tilde r'}:=\{\tilde x': \tilde x \in \tilde X_{p,\tilde r}\}$ is maximal self-stable w.r.t. $\tilde r'.$\end{remark}
 For every natural $k \ge 1$ write $X^{k+1}$ for the set of all $k+1-$tuples
$x=(x_0,x_1,...,x_k)$ with terms $x_n \in X$ for $n=0,1,...,k.$

Denote by $\textbf{\emph{M}}_{n}, n\in\mathbb N,$ the topological space of all real,
$n\times n-$matrices $\textbf{\emph{t}}$ with the topology of pointwise convergence. Let
$\mathfrak{M}$ be a class of nonvoid metric spaces and let $\mathfrak{F}$ be a family of
continuous functions $f: \textbf{\emph{M}}_{n}\rightarrow \mathbb R, n=n(f)$ which are
homogeneous of degree $s_{0}=s_{0}(f)>0,$ i.e.,
\begin{equation}\label{e1.5}
f(\delta \textbf{\emph{t}})=\delta^{s_{0}}(f(\textbf{\emph{t}}))
\end{equation}
for all $\delta\in [0, \infty)$ and all $\textbf{\emph{t}}\in Dom(f).$ We shall say that
$\mathfrak{M}$ is determined by $\mathfrak{F}$ if the following two conditions are
equivalent for every metric space $(X, d):$ $(X, d)\in\mathfrak{M};$  the inequality
$f(\textbf{\emph{m}})\ge 0$ holds for each $f\in\mathfrak{F}$ and all
$\textbf{\emph{m}}\in Dom(f)$ having the form
\begin{equation}\label{e1.6}
\textbf{\emph{m}}=\textbf{\emph{m}}(x_1, x_2,...,x_n)=\left(\begin{array}{cccc}
   d(x_{1}, x_{1})&d(x_{1}, x_{2})&...&d(x_{1}, x_{n})\\
   d(x_{2}, x_{1})&d(x_{2}, x_{2})&...&d(x_{2}, x_{n})\\
    \vdots&\vdots&\ddots&\vdots\\
   d(x_{n}, x_{1})&d(x_{n}, x_{2})&...&d(x_{n}, x_{n})\\
    \end{array}\right), (x_1, x_2, ..., x_n)\in X^{n}.
\end{equation}
\begin{remark}\label{rem 1.2} Equality \eqref{e1.5} and the inequality $s_{0}(f)>0$ imply that $f(\textbf{0})=0$ for every $f\in \mathfrak{F}$
where $\textbf{0}$ is the zero $n\times n-$matrix belonging to $Dom(f).$ It is clear
that each matrix \eqref{e1.6} is equal to $\textbf{0}$ for one-point metric spaces.
 Consequently each one-point metric space belongs to every $\mathfrak{M}$
determinated by some $\mathfrak{F}.$
\end{remark}
For example, the class of all ultrametric spaces is determinated by the family
$\mathfrak{F}$ with the unique element $f: \textbf{\emph{M}}_{3}\rightarrow \mathbb R,$
\begin{equation*}
f(\textbf{\emph{t}})=(t_{1,3} \vee t_{3,2}) -t_{1,2}.
\end{equation*}
Indeed, if $\textbf{\emph{t}}$ has form $\eqref{e1.6},$ then the inequality
$f(\textbf{\emph{t}})\ge 0$ can be written as the ultra-triangle inequality $d(x_{1},
x_{2})\le d(x_{1}, x_{3})\vee d(x_{3}, x_{2}).$

 Let $(X,d)$ be a metric space with marked point $p$ and let $f\in\mathfrak{F}.$ We
set
\begin{equation}\label{e1.8}
\delta(x_1,...,x_n):=\mathop{\vee}\limits_{i=1} ^ {n}d(x_{i},p)
\end{equation} for $(x_1,...,x_n)\in X^{n}$ and define the function $f^{*}: X^{n}\rightarrow \mathbb R$ as
\begin{equation}\label{Func}
f^{*}(x_1,x_2,...,x_n):=\begin{cases}
         f\left(\frac{\textbf{\emph{m}}(x_1, x_2,...,x_n)}{\delta(x_1, x_2,...,x_n)} \right) & \mbox{if} $ $ (x_1,x_2,...,x_n) \ne (p,p,...,p)\\
         0& \mbox{if}$ $ (x_1,x_2,...,x_n) = (p,p,...,p). \\
         \end{cases}
\end{equation}
\begin{theorem}\label{Th1.7}\textbf{(}\textbf{Transfer Principle}\textbf{)}
Let $(X,d)$ be a metric space with marked point p and let $\mathfrak{M}$ be a family of
metric spaces determinated by a family $\mathfrak{F}.$ The following two statements are
equivalent.
\begin{enumerate}
\item[\rm(i)]\textit{Each pretangent space $\Omega_{p,\tilde r}^X$ belongs to $\mathfrak{M}.$}

\item[\rm(ii)]\textit{The inequality \begin{equation}\label{eq1.10} \liminf_{x_1,x_2,...,x_n \to p}f^{*}(x_1,x_2,...,x_n)\ge 0 \end{equation} holds for each $f: \textbf{M}_{n}\rightarrow \mathbb R$ belonging to $\mathfrak{F}.$}
\end{enumerate}

\end{theorem}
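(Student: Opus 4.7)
The plan is to exploit continuity and homogeneity of $f$ to recognise the matrix $\textbf{\emph{m}}(\alpha_1,\dots,\alpha_n)$ of pretangent distances as the entrywise limit of a suitably normalized distance matrix in $X$; the values $f^{*}$ and $f(\textbf{\emph{m}}(\alpha_1,\dots,\alpha_n))$ will then differ only by a positive homogeneity factor, delivering both implications.

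For (ii)$\Rightarrow$(i), I would fix $f\in\mathfrak{F}$, a pretangent space $\Omega_{p,\tilde r}^{X}$, and points $\alpha_1,\dots,\alpha_n$ with representatives $\tilde x^{(i)}=\{x_k^{(i)}\}_{k\in\mathbb N}\in\tilde X_{p,\tilde r}$. By \eqref{eq1.1} and \eqref{eq1.2} the matrix $\textbf{\emph{m}}(\alpha_1,\dots,\alpha_n)$ is the pointwise limit of $\textbf{\emph{m}}(x_k^{(1)},\dots,x_k^{(n)})/r_k$. Set $\delta_k:=\delta(x_k^{(1)},\dots,x_k^{(n)})$ and $M:=\max_{i}\rho(\alpha_i,\pi(\tilde p))$, so that $\delta_k/r_k\to M$. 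If $M=0$ then every $\alpha_i$ coincides with $\pi(\tilde p)$, hence $\textbf{\emph{m}}(\alpha_1,\dots,\alpha_n)$ is the zero matrix and Remark~\ref{rem 1.2} gives $f=0$. If $M>0$ then $\delta_k>0$ for all large $k$ and the homogeneity relation \eqref{e1.5} yields
\[
f^{*}(x_k^{(1)},\dots,x_k^{(n)})=\left(\frac{r_k}{\delta_k}\right)^{\!s_0} f\!\left(\frac{\textbf{\emph{m}}(x_k^{(1)},\dots,x_k^{(n)})}{r_k}\right).
\]
Continuity of $f$ then forces $\lim_k f^{*}(x_k^{(1)},\dots,x_k^{(n)})=M^{-s_0}f(\textbf{\emph{m}}(\alpha_1,\dots,\alpha_n))$, and since $(x_k^{(1)},\dots,x_k^{(n)})\to(p,\dots,p)$, hypothesis (ii) makes this limit nonnegative, so $f(\textbf{\emph{m}}(\alpha_1,\dots,\alpha_n))\ge 0$ and $\Omega_{p,\tilde r}^{X}\in\mathfrak{M}$.

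For (i)$\Rightarrow$(ii), I would argue by contradiction: if \eqref{eq1.10} fails for some $f\in\mathfrak{F}$, then there exist $\varepsilon>0$ and tuples $(x_k^{(1)},\dots,x_k^{(n)})\to(p,\dots,p)$ with $f^{*}(x_k^{(1)},\dots,x_k^{(n)})<-\varepsilon$ for every $k$. Because $f^{*}$ vanishes at the constant tuple, $\delta_k:=\delta(x_k^{(1)},\dots,x_k^{(n)})>0$. The ratios $d(x_k^{(i)},x_k^{(j)})/\delta_k$ lie in $[0,2]$, so a diagonal subsequence renders all of them convergent. Declaring $\tilde r=\{\delta_k\}$ a normalizing sequence, the family $\{\tilde x^{(1)},\dots,\tilde x^{(n)},\tilde p\}$ is self-stable w.r.t.\ $\tilde r$ and Zorn's Lemma extends it to a maximal self-stable $\tilde X_{p,\tilde r}$. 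With $\alpha_i:=\pi(\tilde x^{(i)})$ the choice $r_k=\delta_k$ makes $\textbf{\emph{m}}(\alpha_1,\dots,\alpha_n)$ exactly $\lim_k \textbf{\emph{m}}(x_k^{(1)},\dots,x_k^{(n)})/\delta_k$, and continuity of $f$ gives
\[
f(\textbf{\emph{m}}(\alpha_1,\dots,\alpha_n))=\lim_k f^{*}(x_k^{(1)},\dots,x_k^{(n)})\le -\varepsilon<0,
\]
contradicting $\Omega_{p,\tilde r}^{X}\in\mathfrak{M}$.

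The principal technical points are the degenerate case $M=0$ in the first implication, settled by Remark~\ref{rem 1.2}; the judicious choice $\tilde r=\{\delta_k\}$ in the second implication, which is what aligns the normalization in the definition of $f^{*}$ with the metric on the pretangent space; and the Bolzano--Weierstrass extraction together with Zorn's Lemma used to upgrade finitely many ``bad'' sequences into a bona fide maximal self-stable family realising the contradiction.
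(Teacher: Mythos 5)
Your proposal is correct and follows essentially the same route as the paper: normalizing by $\delta$, using boundedness of the normalized distance matrix plus Bolzano--Weierstrass and Zorn's Lemma to realize the limit matrix as a distance matrix in a pretangent space, and using continuity together with homogeneity to relate $f^{*}$ to $f(\textbf{\emph{m}}(\alpha_1,\dots,\alpha_n))$ up to the positive factor $M^{s_0}$. The only cosmetic differences are that you run the implication (i)$\Rightarrow$(ii) by contradiction where the paper argues directly along a sequence realizing the $\liminf$, and you split the degenerate case on $M=\vee_i\rho(\alpha_i,\pi(\tilde p))$ rather than on $\vee_{i,j}\rho(\alpha_i,\alpha_j)$; both variants are sound.
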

\begin{proof}
Suppose that (i) holds. Let us prove inequality \eqref{eq1.10}  for each
$f\in\mathfrak{F}.$ Let $f: \textbf{M}_{n}\rightarrow \mathbb R$ belong to
$\mathfrak{F}$ and let $\tilde x_{i} =\{x_{i,m}\}_{m\in\mathbb N}\in\tilde X,$
$i=1,2,...,n,$ be some sequences such that
\begin{equation}\label{eq1.11}
\lim_{m\to\infty}f^{*}(x_{1,m},...,x_{n,m})=\liminf_{x_1,...,x_n \to
p}f^{*}(x_1,...,x_n)
\end{equation}
and
\begin{equation}\label{eq1.12}
p=\lim_{m\to\infty}x_{1,m}=\lim_{m\to\infty}x_{2,m}=...=\lim_{m\to\infty}x_{n,m}.
\end{equation}
Limit relation \eqref{eq1.12} implies
$$\lim_{m\to\infty}\delta (x_{1,m},...,x_{n,m})=0$$ where $\delta$ is defined by
\eqref{e1.8}. If for all sufficiently large $m$ we have $\delta(x_{1,m},...,x_{n,m})=0,$
then the limit in \eqref{eq1.11} vanishes, so that \eqref{eq1.10} holds. Consequently we
may suppose, going to a subsequence, that $$\delta(x_{1,m},...,x_{n,m})>0$$ for all
$m\in\mathbb N.$ Define a normilizing sequence $\tilde r=\{r_{m}\}_{m\in\mathbb N}$ as
$$r_{m}:=\delta(x_{1,m},...,x_{n,m}), \, m\in\mathbb{N}.$$
All elements of the matrix $\frac{\textbf{\emph{m}}(x_1,...,x_n)}{\delta(x_1,...,x_n)},$
see \eqref{e1.6}, are bounded because
\begin{equation}\label{e2.9*}0\le\frac{\mathop{\vee}\limits_{i,j=1} ^ {n}d(x_{i,m},x_{j,m})}{r_m} \le
\frac{2\mathop{\vee}\limits_{i=1} ^ {n}d(x_{i,m},p)}{r_m}=2. \end{equation} Hence going
to a subsequence once again we can assume that all $\tilde x_{i},$ $i=1,...,n,$ and
$\tilde p$ are pairwise mutually stable. The functions $f\in\mathfrak{F}$ are
continuous. Hence using \eqref{Func} we obtain
\begin{equation}\label{eq1.13}
\lim_{m\to\infty}f^{*}(x_{1,m},...,x_{n,m})=f(\emph{\textbf{t}}),
\end{equation} where
\begin{equation*}\emph{\textbf{t}}=
\left(\begin{array}{cccc}
   \tilde d(\tilde x_{1}, \tilde x_{1})&\tilde d(\tilde x_{1}, \tilde x_{2})&...&\tilde d(\tilde x_{1},\tilde x_{n})\\
   \tilde d(\tilde x_{2}, \tilde x_{1})&\tilde d(\tilde x_{2}, \tilde x_{2})&...&\tilde d(\tilde x_{2},\tilde x_{n})\\
    \vdots&\vdots&\ddots&\vdots\\
   \tilde d(\tilde x_{n}, \tilde x_{1})&\tilde d(\tilde x_{n}, \tilde x_{2})&...&\tilde d(\tilde x_{n}, \tilde x_{n})\\
    \end{array}\right).
\end{equation*}
If $\tilde X_{p, \tilde r}$ is a maximal self-stable family such that $\tilde
x_{i}\in\tilde X_{p,\tilde r}, $ $i=1,...,n$ and $\Omega_{p,\tilde r}^{X}$ is the metric
identification of $\tilde X_{p, \tilde r},$ then $\Omega_{p,\tilde r}^{X}
\in\mathfrak{M}.$ Since the family $\mathfrak{M}$ is determined by $\mathfrak{F}$ and
\begin{equation*}\emph{\textbf{t}}=
\left(\begin{array}{cccc}
 \rho(\alpha_{1}, \alpha_{1})&\rho (\alpha_{1}, \alpha_{2})&...&\rho(\alpha_{1},\alpha_{n})\\
   \rho(\alpha_{2}, \alpha_{1})&\rho(\alpha_{2}, \alpha_{2})&...&\rho(\alpha_{2},\alpha_{n})\\
    \vdots&\vdots&\ddots&\vdots\\
   \rho(\alpha_{n}, \alpha_{1})&\rho(\alpha_{n}, \alpha_{2})&...&\rho(\alpha_{n}, \alpha_{n})\\
    \end{array}\right)
\end{equation*}
where $\alpha_{i}=\pi(\tilde x_{i}),$ see \eqref{eq1.3}, we obtain the inequality
$$f(\emph{\textbf{t}})\ge 0.$$ This inequality, \eqref{eq1.13}and \eqref{eq1.11} imply
\eqref{eq1.10}.

Assume now that \eqref{eq1.10} holds for all $f\in\mathfrak{F}.$ We must prove that each
$\Omega_{p,\tilde r}^{X}$ belongs to $\mathfrak{M}.$ Let $\Omega_{p,\tilde r}^{X}$ be a
pretangent space with corresponding maximal self-stable family $\tilde X_{p,\tilde r}.$
The relation $\Omega_{p,\tilde r}^{X} \in \mathfrak{M}$ means that for every $f:
\textbf{\emph{M}}_{n}\rightarrow \mathbb R$ the inequality
\begin{equation}\label{eq1.14}
f(\emph{\textbf{m}}(\alpha_1, ..., \alpha_n))\ge 0
\end{equation} holds for all $\alpha_1, ..., \alpha_n \in \Omega_{p,\tilde r}^{X}$ where
\begin{equation*}\emph{\textbf{m}}(\alpha_1, ..., \alpha_n)=
\left(\begin{array}{cccc}
 \rho(\alpha_{1}, \alpha_{1})&\rho (\alpha_{1}, \alpha_{2})&...&\rho(\alpha_{1},\alpha_{n})\\
   \rho(\alpha_{2}, \alpha_{1})&\rho(\alpha_{2}, \alpha_{2})&...&\rho(\alpha_{2},\alpha_{n})\\
    \vdots&\vdots&\ddots&\vdots\\
   \rho(\alpha_{n}, \alpha_{1})&\rho(\alpha_{n}, \alpha_{2})&...&\rho(\alpha_{n}, \alpha_{n})\\
    \end{array}\right).
\end{equation*}
Inequality \eqref{eq1.14} holds automatically if $$\mathop{\vee}\limits_{i,j=1} ^
{n}\rho(\alpha_i, \alpha_j)=0,$$ see Remark~\ref{rem 1.2}. Hence we may suppose that
$$\mathop{\vee}\limits_{i,j=1} ^ {n}\rho(\alpha_i, \alpha_j)>0.$$ If $\alpha =\pi(\tilde
p),$ then the last inequality implies
\begin{equation}\label{eq1.15}
\mathop{\vee}\limits_{i=1} ^ {n}\rho(\alpha, \alpha_i)>0.
\end{equation}
Let $\tilde x_{i}=\{x_{i,m}\}_{m\in\mathbb N},i~=~1,...,n$ be elements of $\tilde
X_{p,\tilde r}$ such that $\alpha_{i}=\pi(\tilde x_{i}).$  Using inequality
\eqref{eq1.15} we can write
\begin{equation*}
\rho(\alpha_{i},\alpha_{j})=\lim_{m\to\infty}\frac{d(x_{i,m},
x_{j,m})}{r_{m}}=\lim_{m\to\infty}\frac{\delta(x_{1,m},x_{2,m},...,x_{n,m})}{r_{m}}\frac{d(x_{i,m},x_{j,m})}{\delta(x_{1,m},x_{2,m},...,x_{n,m})}
\end{equation*}
\begin{equation}\label{eq1.16}
=\mathop{\vee}\limits_{i=1} ^ {n}\rho(\alpha, \alpha_i)\lim_{m\to\infty}\frac{d(x_{i,m},
x_{j,m})}{\delta(x_{1,m},x_{2,m},...,x_{n,m})}
\end{equation} for $i,j=1,...,n.$ From \eqref{e1.5}, \eqref{Func} and
\eqref{eq1.16} we obtain
\begin{equation*}
f\left( \frac{\emph{\textbf{m}}(\alpha_{1},...,\alpha_{n})}{\mathop{\vee}\limits_{i=1} ^
{n}\rho(\alpha, \alpha_i)}\right)=\lim_{m\to\infty}f^{*}(x_{1,m},x_{2,m},...,x_{n,m}),
\end{equation*}
\begin{equation}\label{eq1.17}
f(\emph{\textbf{m}}(\alpha_{1},...,\alpha_{n}))=\left(\mathop{\vee}\limits_{i=1} ^
{n}\rho(\alpha,
\alpha_i)\right)^{s_{0}}\lim_{m\to\infty}f^{*}(x_{1,m},x_{2,m},...,x_{n,m})
\end{equation}
where $s_{0}>0$ is the degree of homogeneity of f. Since
\begin{equation*}
\lim_{m\to\infty}f^{*}(x_{1,m},x_{2,m},...,x_{n,m})\ge \liminf_{x_{1},x_{2},...,x_{n}
\to p}f^{*}(x_1,...,x_n)\ge 0
\end{equation*} and $$\left(\mathop{\vee}\limits_{i=1} ^ {n}\rho(\alpha, \alpha_i) \right)^{s_{0}}>0,$$
equality \eqref{eq1.17} implies \eqref{eq1.14}.
\end{proof}
Let $f:\textbf{\emph{M}}_{n}\rightarrow \mathbb R$ be a continuous homogeneous function
with the degree of homogeneity $s_{0}=s_{0}(f)>0,$ let $(X,d)$ be a metric space with a
marked point $p$ and let $f^{*}: X^{n}\rightarrow \mathbb R$ be the function given by
\eqref{Func}. Define the family $\mathfrak{U}$ of metric space $(X,d)$ by the rule
\begin{equation}\label{Fam}
(X,d) \in \mathfrak{U} \Leftrightarrow f(\textbf{\emph{m}}(x_1,...,x_n ))=0
\end{equation}
for all $(x_1,...,x_n)\in X^{n}$ where $\textbf{\emph{m}}(x_1,...,x_n)$ is the matrix of
form \eqref{e1.6}.
\begin{corollary}\label{col1.8}
Let $(X,d)$ be a metric space with a marked point $p.$ All pretangent spaces
$\Omega_{p,\tilde r}^X$ belong to $\mathfrak{U}$ if and only if
\begin{equation}\label{e1.18}
\lim_{x_1,...,x_n \to p}f^{*}(x_1,...,x_n)=0.
\end{equation}
\end{corollary}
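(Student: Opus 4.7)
The plan is to deduce this corollary directly from the Transfer Principle by encoding the equality $f(\textbf{\emph{m}}) = 0$ as the pair of inequalities $f(\textbf{\emph{m}}) \ge 0$ and $-f(\textbf{\emph{m}}) \ge 0$. Concretely, I would introduce the two-element family $\mathfrak{F} := \{f, -f\}$ and observe that the class $\mathfrak{U}$ defined by \eqref{Fam} is exactly the class determined by $\mathfrak{F}$ in the sense of the definition preceding Remark~\ref{rem 1.2}. For this to be legitimate, one needs $-f$ to satisfy the hypotheses of Theorem~\ref{Th1.7}, which is immediate: $-f$ is continuous because $f$ is, and homogeneous of the same positive degree $s_0$, since multiplying \eqref{e1.5} by $-1$ preserves the identity.

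The second ingredient is the elementary identity $(-f)^{*} = -f^{*}$ as functions $X^{n} \to \mathbb R$. This follows directly from \eqref{Func}: on the generic piece $(x_1,\dots,x_n) \ne (p,\dots,p)$ both sides equal $-f(\textbf{\emph{m}}/\delta)$, and on the degenerate point they both vanish. Once this is in hand, I would apply Theorem~\ref{Th1.7} to the family $\{f,-f\}$, which gives that every $\Omega_{p,\tilde r}^{X}$ belongs to $\mathfrak{U}$ if and only if
\begin{equation*}
\liminf_{x_1,\dots,x_n \to p} f^{*}(x_1,\dots,x_n) \ge 0 \quad \text{and} \quad \liminf_{x_1,\dots,x_n \to p} (-f^{*})(x_1,\dots,x_n) \ge 0.
\end{equation*}

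The second of these is equivalent to $\limsup_{x_1,\dots,x_n \to p} f^{*}(x_1,\dots,x_n) \le 0$, and together the two conditions say exactly that $\lim_{x_1,\dots,x_n \to p} f^{*}(x_1,\dots,x_n)$ exists and equals $0$, which is \eqref{e1.18}. There is no real obstacle to overcome here: the entire argument is a one-step reduction to Theorem~\ref{Th1.7}, and the only content is the observation that $\mathfrak{U}$ is cut out by the single pair of homogeneous continuous inequalities $\pm f \ge 0$, together with the trivial compatibility $(-f)^{*} = -f^{*}$.
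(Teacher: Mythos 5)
Your proposal is correct and follows essentially the same route as the paper's own proof: forming $\mathfrak{F}=\{f,-f\}$, noting that $\mathfrak{U}$ is determined by this family, applying Theorem~\ref{Th1.7}, and converting the second liminf condition into a limsup to obtain the limit \eqref{e1.18}. Your explicit check that $(-f)^{*}=-f^{*}$ is a small detail the paper leaves implicit, but it does not change the argument.
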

\begin{proof}
Let us consider the two-point set $\mathfrak{F}=\{f, -f\}.$ Note that $(-f)$ is also
continuous homogeneous function of degree $s_{0}.$ The family $\mathfrak{U}$ is
determined by $\mathfrak{F}$ because $f(\textbf{\emph{m}}(x_1,...x_n))~=~0$ if and only
if $f(\textbf{\emph{m}}(x_1,...x_n))\ge 0$ and $-f(\textbf{\emph{m}}(x_1,...x_n))\ge 0.$
Hence, by Theorem~\ref{Th1.7}, all pretangent spaces $\Omega_{p,\tilde r}^X$ belong to
$\mathfrak{U}$ if and only if
\begin{equation}\label{e1.19}
\liminf_{x_1,...,x_n \to p}f^{*}(x_1,...x_n)\ge 0 \quad \mbox{and} \quad
\liminf_{x_1,...,x_n \to p}(-f^{*}(x_1,...x_n))\ge 0.
\end{equation}
The last inequality is the equivalent of
\begin{equation*}
\limsup_{x_1,...,x_n \to p}f^{*}(x_1,...x_n)\le 0 .
\end{equation*}
This inequality and the first inequality in \eqref{e1.19} give \eqref{e1.18}.
\end{proof}
\begin{remark}\label{rem1.9}
The proof of Theorem~\ref{Th1.7} is a generalization of the proof of Theorem~3.1 from
\cite{BD} which gives the necessary and sufficient conditions under which all pretangent
spaces $\Omega_{p,\tilde r}^{X}$ are \emph{ptolemaic}. These conditions lead to a
criterion of isometric embeddability of pretangent spaces in $E^{1}.$
\end{remark}
The following corollary is of interest in its own right.
\begin{corollary}\textbf{(Conservation Principle)} \label{col1.10}
Let $\mathfrak{M}$ be a class of nonvoid metric spaces determined by a family
$\mathfrak{F}.$ Then for every metric space $X\in\mathfrak{M}$
 all pretangent spaces $\Omega_{p,\tilde r}^{X}$ belong to $\mathfrak{M}$ for each $p\in X.$\end{corollary}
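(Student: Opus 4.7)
The plan is to derive the Conservation Principle as an almost immediate corollary of the Transfer Principle (Theorem~\ref{Th1.7}). Fix $p \in X$ and an arbitrary $f \in \mathfrak{F}$, say $f : \textbf{\emph{M}}_n \to \mathbb R$. I want to verify condition (ii) of Theorem~\ref{Th1.7}, namely that
\[
\liminf_{x_1,\dots,x_n \to p} f^{*}(x_1,\dots,x_n) \ge 0,
\]
and then read off (i), which is exactly the conclusion of Corollary~\ref{col1.10}.

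First I would observe that because $(X,d) \in \mathfrak{M}$ and $\mathfrak{M}$ is determined by $\mathfrak{F}$, the inequality $f(\textbf{\emph{m}}(x_1,\dots,x_n)) \ge 0$ holds for every tuple $(x_1,\dots,x_n) \in X^n$. Next I would use the positive homogeneity \eqref{e1.5} with $s_0 = s_0(f) > 0$: if $(x_1,\dots,x_n) \ne (p,\dots,p)$, then $\delta(x_1,\dots,x_n) > 0$ and
\[
f^{*}(x_1,\dots,x_n) = f\!\left(\frac{\textbf{\emph{m}}(x_1,\dots,x_n)}{\delta(x_1,\dots,x_n)}\right) = \delta(x_1,\dots,x_n)^{-s_0}\, f(\textbf{\emph{m}}(x_1,\dots,x_n)) \ge 0,
\]
while $f^{*}(p,\dots,p) = 0$ by definition \eqref{Func}. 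Thus $f^{*} \ge 0$ pointwise on $X^n$, so the required liminf is nonnegative (in fact trivially so).

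Since $f \in \mathfrak{F}$ was arbitrary, condition (ii) of Theorem~\ref{Th1.7} holds, and therefore every pretangent space $\Omega_{p,\tilde r}^{X}$ lies in $\mathfrak{M}$; as $p$ was arbitrary, this gives the claim. The only subtlety worth checking is the boundary case where $\delta = 0$, i.e.\ $x_1 = \dots = x_n = p$, but this is handled by the second branch of \eqref{Func} together with Remark~\ref{rem 1.2}, which ensures $f(\mathbf 0) = 0$. There is no real obstacle here; the work was already done inside Theorem~\ref{Th1.7}, and the Conservation Principle is essentially a restatement of the fact that a global property encoded by $\mathfrak{F}$ passes to the infinitesimal level for free when it already holds globally.
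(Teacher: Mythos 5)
Your proposal is correct and is exactly the argument the paper intends (the corollary is stated without proof as an immediate consequence of Theorem~\ref{Th1.7}): since $X\in\mathfrak{M}$ gives $f(\textbf{\emph{m}}(x_1,\dots,x_n))\ge 0$ everywhere, homogeneity yields $f^{*}\ge 0$ pointwise, so condition (ii) of the Transfer Principle holds trivially. Nothing is missing.
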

\begin{remark}\label{rem1.11}
It is plain to prove that in the Transfer Principle instead of the function
\begin{equation*}\delta(x_1,...,x_n)=\mathop{\vee}\limits_{i=1} ^
{n}d(x_{i},p)
\end{equation*} we can use an arbitrary function $\varepsilon: X^{n}\rightarrow
[0,\infty)$ fulfilling the restrictions $$\varepsilon (x_1,...,x_n)=0\Leftrightarrow
x_1=...=x_n=p$$ and
\begin{equation*}\frac{1}{c}\le\liminf_{x_1,...,x_n\to p}\frac{\varepsilon (x_1,...,x_n)}{\delta(x_1,...,x_n)}\le\limsup_{x_1,...,x_n\to p}\frac{\varepsilon
(x_1,...,x_n)}{\delta(x_1,...,x_n)}\le c
\end{equation*} with some constant $c\in [1,\infty).$ Here we put
\begin{equation*}
\frac{\varepsilon (p,...,p)}{\delta(p,...,p)}=1.
\end{equation*}
For example we can take
\begin{equation*}\varepsilon (x_1,...,x_n)=\left(\sum_{i=1}^{n}d^{s}(p,x_i)\right)^{\frac{1}{s}}
\end{equation*} with $s>0.$
\end{remark}

\section {Infinitesimal versions of Menger's and Shoenberg's embedding theorems }
\hspace*{\parindent} In this section we start with the reformulation of the Menger
Embedding Theorem in a suitable form  for application of the Transfer Principle. Recall
that the Cayley-Menger determinant is the next determinant

$$D_{k}(x_0,x_1,...,x_k)=\begin{vmatrix}
    0&1&1&...&1\\
    1&0&d^{2}(x_0,x_1)&...&d^{2}(x_0,x_k)\\
    1&d^{2}(x_1,x_0)&0&...&d^{2}(x_1,x_k)\\
    \vdots&\vdots&\vdots&\ddots&\vdots\\
    1&d^{2}(x_k,x_0)&d^{2}(x_k,x_1)&...&0\\
    \end{vmatrix}$$ where $(x_0,x_1,...,x_k)\in X^{k+1}.$

\begin{proposition}\label{main prop.} Let $n \in \mathbb N.$ A metric space $X$ is isometrically embeddable in $E^n$ if and only if
\begin{equation}\label{eq2.1} (-1)^{k+1}D_{k}(x_0,x_1,...,x_k)\ge 0 \end{equation} for every $(x_0,x_1,...,x_k)\in X^{k+1}$ with $k\le n$ and \begin{equation}\label{eq2.2} D_{k}(x_0,x_1,...,x_k)=0\end{equation} for every $(x_0,x_1,...,x_k)\in X^{k+1}$ with $k=n+1$ and $k=n+2$.\end{proposition}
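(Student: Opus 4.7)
The plan is to reduce to the classical Menger--Schoenberg embedding theory by treating the two directions separately. Throughout I will use the well-known identity that for $k+1$ points $q_0,\dots,q_k$ in a Euclidean space of dimension $\ge k$,
\[
D_k(q_0,\dots,q_k)=(-1)^{k+1}\,2^k(k!)^2\,V_k^2(q_0,\dots,q_k),
\]
where $V_k$ is the $k$-dimensional Euclidean volume of the simplex with vertices $q_0,\dots,q_k$. This formula is a purely algebraic identity about $E^N$ for large $N$.

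For the \emph{necessity} (forward) direction, assume $\iota\colon X\hookrightarrow E^n$ is an isometric embedding and pick $(x_0,\dots,x_k)\in X^{k+1}$. Since the Cayley--Menger determinant depends only on the pairwise distances, $D_k(x_0,\dots,x_k)=D_k(\iota(x_0),\dots,\iota(x_k))$. If $k\le n$ the displayed identity gives $(-1)^{k+1}D_k=2^k(k!)^2 V_k^2\ge 0$, so \eqref{eq2.1} holds. If $k\in\{n+1,n+2\}$ the simplex $\iota(x_0),\dots,\iota(x_k)$ lives in the $n$-dimensional ambient space and so is affinely degenerate, forcing $V_k=0$ and hence \eqref{eq2.2}.

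For the \emph{sufficiency} (reverse) direction, the plan is to invoke the classical theorem of Menger stating that a metric space $X$ is isometrically embeddable in $E^n$ if and only if every $(n+3)$-point subset of $X$ is. Thus the task reduces to the \emph{finite-point} version: for any $m\le n+3$ points, the sign/vanishing hypotheses \eqref{eq2.1}--\eqref{eq2.2} imply isometric embeddability in $E^n$. For a set $S$ of $m\le n+1$ points, one produces the embedding inductively: given an isometric embedding of $(x_0,\dots,x_{k-1})$ into $E^{k-1}\subset E^n$ realized as a non-degenerate simplex, the non-negativity condition $(-1)^{k+1}D_k(x_0,\dots,x_k)\ge 0$ translates exactly into the solvability (as a real number) of the equation giving the signed ``height'' of the would-be $x_k$ above the affine hull of the previous points, and hence yields a placement of $x_k$ in $E^k\subset E^n$. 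For $m=n+2$ and $m=n+3$ the extra hypothesis $D_k=0$ forces the new point to lie in the affine hull already constructed (height zero), so the embedding stays in $E^n$.

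The main obstacle is the careful execution of this inductive placement, together with resolving the edge cases where earlier simplices happen to be degenerate (so one has fewer than $k$ affinely independent points already placed). In the degenerate case one rearranges the points to embed a maximal affinely independent subfamily first, then checks that the vanishing Cayley--Menger conditions for the remaining points force them into the spanned affine flat with prescribed distances. This part of the argument is the standard Menger/Blumenthal finite-case analysis, and once it is done, combining with Menger's global-to-finite reduction closes the proof. In the write-up, rather than redo this folklore argument in full, I would cite Blumenthal's monograph on the theory of distance geometry for the finite-case statement and restrict the proof here to the algebraic translation between \eqref{eq2.1}--\eqref{eq2.2} and the usual form of the Menger--Schoenberg theorem (which is typically phrased in terms of positive semidefiniteness of a related Gram matrix of rank $\le n$).
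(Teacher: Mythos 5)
Your proposal is correct and follows essentially the same route as the paper: the volume identity $V_k^2=\frac{(-1)^{k+1}}{2^k(k!)^2}D_k$ gives the necessity, and Menger's reduction to subsets of at most $n+3$ points combined with Blumenthal's finite-point criterion gives the sufficiency. The only cosmetic difference is that for the $(n+2)$- and $(n+3)$-point subsets the paper first embeds them in $E^{n+2}$ and then uses the vanishing of the Cayley--Menger determinants to show the affine span has dimension at most $n$, whereas you phrase the same fact as a ``height zero'' condition in an inductive placement --- and both arguments ultimately defer to Blumenthal's monograph for the finite case.
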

To prove Proposition \ref{main prop.} we shall use some known results of K. Menger and
L. Blumenthal. Our first lemma is the simplest form of the Menger Embedding Theorem.
\begin{lemma}\label{Menger}A metric space $X$ is isometrically embeddable in $E^n$ if and only if each set $A\subseteq X$ with\emph{ card}$A \le n+3$ is isometrically embeddable in $E^n.$  \end{lemma}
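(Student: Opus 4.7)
The ``only if'' direction is immediate, since the restriction of an isometric embedding is an isometric embedding. For the ``if'' direction I would first show by induction on cardinality that every finite subset of $X$ embeds in $E^n$, and then lift this to an embedding of $X$ itself via a scaffolding argument.

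For the finite case the base $|B| \leq n+3$ is the hypothesis. For $|B| = m + 1 > n + 3$, I would fix $p \in B$ and use the inductive hypothesis to obtain an isometric embedding $f \colon B \setminus \{p\} \to E^n$. Let $V$ be the affine hull of $f(B \setminus \{p\})$ and set $k := \dim V \le n$; choose $z_0, \ldots, z_k \in B \setminus \{p\}$ whose images are affinely independent. Applying the hypothesis to $\{z_0, \ldots, z_k, p\}$ (cardinality $\leq n+2$) and composing with a rigid motion aligning $z_i \mapsto f(z_i)$ yields a point $p^{*} \in E^n$ satisfying $|p^{*} - f(z_i)| = d(p, z_i)$ for all $i$. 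Setting $f(p) := p^{*}$, it remains to verify that $|p^{*} - f(z)| = d(p, z)$ holds for every remaining $z \in B \setminus \{p, z_0, \ldots, z_k\}$.

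To perform this verification, I would apply the hypothesis to the $(k+3)$-point set $\{z_0, \ldots, z_k, z, p\}$ to obtain an isometric embedding $g$ into $E^n$, and align it so that $g(z_i) = f(z_i)$. Since $f(z) \in V$ already realizes the distances $d(z, z_i)$ to the affinely independent scaffold $f(z_0), \ldots, f(z_k)$ within $V$, any point of $E^n$ at the same distances must have $f(z)$ as its orthogonal projection onto $V$ and zero orthogonal component, forcing $g(z) = f(z)$. Likewise both $g(p)$ and $p^{*}$ share the same orthogonal projection $q \in V$ and the same orthogonal distance $h$ from $V$ (each determined by $d(p, z_0), \ldots, d(p, z_k)$ via the Cayley-Menger formula). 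Hence
$$|g(p) - g(z)|^{2} = |g(p) - f(z)|^{2} = |q - f(z)|^{2} + h^{2} = |p^{*} - f(z)|^{2},$$
and the required equality follows from $|g(p) - g(z)| = d(p, z)$.

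For arbitrary $X$, let $d \le n$ be the maximum affine dimension attained by the image of a finite subset of $X$ under an isometric embedding into $E^n$; the finite case guarantees this maximum is attained, say by an embedding $g_0$ of some $A = \{x_0, \ldots, x_d\} \subseteq X$ with $g_0(x_0), \ldots, g_0(x_d)$ affinely independent. For each $x \in X$ I would embed $A \cup \{x\}$, align via a rigid motion so that $x_i \mapsto g_0(x_i)$, and observe that maximality of $d$ forces the image of $x$ into the affine hull of $g_0(A)$, where the distances $d(x, x_i)$ determine it uniquely; define $F(x)$ to be this image. Applying the same argument to $A \cup \{x, y\}$ shows $|F(x) - F(y)| = d(x, y)$, so $F \colon X \to E^n$ is the desired embedding. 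I expect the main obstacle to be the finite inductive step, and within it the verification that $p^{*}$ sits at the correct distance from every $z$ outside the scaffold; the key insight making this work is that $f(z)$ already lies in $V$, which eliminates the orthogonal ambiguity in the auxiliary embedding $g$ and reduces the consistency check to a projection computation inside $V$.
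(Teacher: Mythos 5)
Your proof is correct. Note that the paper does not actually prove this lemma; it only refers to Blumenthal \cite{Bl}, and your argument is in essence the classical congruence-order proof found there: fix a subset whose image spans an affine subspace of maximal dimension $d\le n$, use the hypothesis on $(d+2)$-point sets to place each further point, and use $(d+3)$-point sets to check that all pairwise distances are preserved. Your key step is the right one: a point of $E^n$ whose prescribed distances to $k+1$ affinely independent points of an affine subspace $V$ are already realized by a point of $V$ must coincide with that point, and the computation $|g(p)-g(z)|^2=|q-f(z)|^2+h^2=|p^*-f(z)|^2$ correctly disposes of the residual ambiguity of $g(p)$ in the orthogonal complement of $V$. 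One remark: the finite induction in your second and third paragraphs is superfluous. The scaffolding argument of your last paragraph applies verbatim to arbitrary $X$ (finite or infinite) once $d$ is defined as the maximal affine dimension of the image of a subset of cardinality at most $n+1$ (all such subsets embed directly by hypothesis); maximality still forces each new point into the affine hull of the scaffold, either because $|A\cup\{x\}|\le n+1$ when $d<n$, or trivially when $d=n$ since no subset of $E^n$ has affine dimension $n+1$. With that observation the separate treatment of finite $X$ can be dropped and the whole proof reduces to your final paragraph together with the uniqueness lemma.
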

The clear proof of it can be found in [\cite{Bl}, p.95].

The following lemma is a corollary of Blumenthal's solution of the problem of isometric
embedding of semimetric spaces in the Euclidean spaces, see [\cite{Bl}, p.105].

\begin{lemma}\label{Blum}Let $X$ be a finite metric space with \emph{ card}$X = n+1.$ Then $X$ is isometrically embeddable in $E^n$ if and only if the Cayley-Menger determinant $D(x_0,x_1,...,x_k)$ has the sign of $(-1)^{k+1}$ or vanishes for every  $(x_0, x_1,...,x_k)\in X^{k+1},$ $k=1,2,...,n.$ \end{lemma}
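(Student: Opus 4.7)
The plan is to establish Proposition~\ref{main prop.} by combining Lemma~\ref{Menger} and Lemma~\ref{Blum} with the standard volume interpretation of the Cayley-Menger determinant, namely that $(-1)^{k+1} D_k(x_0,\ldots,x_k) = 2^k (k!)^2 V_k^2$ for points in a Euclidean space, where $V_k$ is the $k$-dimensional volume of the simplex with vertices $x_0,\ldots,x_k$. In particular, for points in some Euclidean space, $D_k(x_0,\ldots,x_k) = 0$ is equivalent to the affine dimension of $\{x_0,\ldots,x_k\}$ being strictly less than $k$.

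For the necessity direction, I assume $X$ embeds isometrically in $E^n$. Given any tuple $(x_0,\ldots,x_k) \in X^{k+1}$, the $k+1$ points span an affine subspace of dimension at most $\min(k,n)$. If $k \leq n$, they lie in an isometric copy of $E^k$, so Lemma~\ref{Blum} applied to this $(k+1)$-point set with ambient dimension $k$ forces $(-1)^{k+1}D_k \geq 0$, giving \eqref{eq2.1}. If $k = n+1$ or $k = n+2$, the points lie in an affine subspace of dimension at most $n < k$, so by the volume formula $D_k = 0$, giving \eqref{eq2.2}.

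For sufficiency, I assume \eqref{eq2.1} and \eqref{eq2.2} hold throughout $X$. By Lemma~\ref{Menger} it suffices to prove that every $A \subseteq X$ with $\mathrm{card}\,A \leq n+3$ embeds in $E^n$. If $\mathrm{card}\,A = m+1 \leq n+1$, I apply Lemma~\ref{Blum} with ambient dimension $m$: the required sign conditions for $k = 1,\ldots,m$ follow from \eqref{eq2.1} (since $m \leq n$), yielding an embedding of $A$ into $E^m \subseteq E^n$. If $\mathrm{card}\,A = n+2$, I apply Lemma~\ref{Blum} with ambient dimension $n+1$: the sign conditions for $k \leq n$ come from \eqref{eq2.1}, and for $k = n+1$ they come from \eqref{eq2.2}, where $D_{n+1} = 0$ falls under the ``vanishes'' clause. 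This embeds $A$ in $E^{n+1}$; the vanishing of $D_{n+1}$ on the image then forces, via the volume formula, the affine span to have dimension at most $n$, so $A$ embeds in $E^n$. The case $\mathrm{card}\,A = n+3$ is analogous: Lemma~\ref{Blum} with ambient dimension $n+2$ produces an embedding into $E^{n+2}$, and the vanishing of every $D_{n+1}$ on $(n+2)$-subsets together with $D_{n+2}$ of $A$ itself forces the affine span to have dimension at most $n$, because if it were $n+1$ some $(n+2)$-subset would be affinely independent, contradicting $D_{n+1}=0$ there.

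The main subtlety I expect is this final dimension-reduction step, where the volume interpretation of the Cayley-Menger determinant is essential: knowing each determinant in \eqref{eq2.2} vanishes on the image translates into affine dependence of the corresponding vertex sets in Euclidean space, and one must cleanly conclude that the entire $(n+3)$-point configuration collapses into an $n$-dimensional affine subspace. All remaining parts are routine bookkeeping, feeding the hypotheses into Lemma~\ref{Blum} and then into Lemma~\ref{Menger}.
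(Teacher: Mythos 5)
You have proved the wrong statement. The result under review is Lemma~\ref{Blum} --- the characterization of isometric embeddability of an $(n+1)$-point metric space in $E^{n}$ by the signs of its Cayley--Menger determinants --- but your argument is a proof of Proposition~\ref{main prop.}, and it explicitly invokes Lemma~\ref{Blum} at every essential step: for the sign conditions when $k\le n$ in the necessity direction, and for producing the embeddings of the subsets $A$ into $E^{m}$, $E^{n+1}$ and $E^{n+2}$ in the sufficiency direction. As a proof of Lemma~\ref{Blum} itself this is circular. Nothing in your text addresses the genuinely nontrivial content of the lemma, namely why, for a finite configuration that is not a priori Euclidean, the conditions that each $D_{k}(x_0,\dots,x_k)$ has the sign of $(-1)^{k+1}$ or vanishes actually force the existence of an isometric copy in $E^{n}$. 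The volume identity \eqref{eq2.4} on which you lean is only available \emph{after} an embedding is known to exist, so it cannot supply the sufficiency half of Lemma~\ref{Blum}.

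For the record, the paper does not prove Lemma~\ref{Blum} either: it is stated as a corollary of Blumenthal's solution of the Euclidean embedding problem for semimetric spaces, with a pointer to [\cite{Bl}, p.~105]. A self-contained proof would require genuine distance-geometry input --- for instance the equivalence between the Cayley--Menger sign conditions and the positive semidefiniteness of the quadratic form \eqref{eq2.14} (the route the paper takes for Lemma~\ref{lemSch} via Schoenberg's Theorem~\ref{Schoen}), or an induction on the number of points that constructs the embedding coordinates from the determinants. Your argument for Proposition~\ref{main prop.}, taken on its own terms, is essentially sound and close to the paper's proof of \emph{that} proposition (the paper handles the dimension-reduction step contrapositively, extracting linearly independent vectors and applying \eqref{eq2.4}), but it does not touch the lemma you were asked to prove.
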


\begin{proof}[Proof of Proposition \ref{main prop.}.] Suppose that $X$ is isometrically
embeddable in $E^{n}.$ Let $(x_0, x_1, ..., x_k)\in$ $\in X^{k+1}.$ If $k\le n,$ then
inequality \eqref{eq2.1} follows directly from Lemma \ref{Blum}. Let $k=n+1$ or $k=n+2.$
We can consider $E^n$ as a subspace of the Euclidean space $E^k.$

Let $F$ be an isometric embedding of $X$ in $E^k.$

Write $x_{0}^{*}:=F(x_0), x_{1}^{*}:=F(x_1), ..., x_{k}^{*}:=F(x_k)$ and denote by
$V(x_{0}^{*},x_{1}^{*},...,x_{k}^{*})$ the volume of the simplex with vertices
$x_{0}^{*},x_{1}^{*},...,x_{k}^{*}.$ This simplex lies in the subspace $E^n$ of the
space $E^k.$ Thus, we have \begin{equation}\label{eq2.3}
V(x_{0}^{*},x_{1}^{*},...,x_{k}^{*})=0. \end{equation} Since
\begin{equation}\label{eq2.4}
V^{2}(x_{0}^{*},x_{1}^{*},...,x_{k}^{*})=\frac{(-1)^{k+1}}{2^{k}(k!)^{2}}D_{k}(x_{0}^{*},x_{1}^{*},...,x_{k}^{*})=\frac{(-1)^{k+1}}{2^{k}(k!)^{2}}D_{k}(x_{0},x_{1},...,x_{k}),
\end{equation} see, for example, [\cite{Bl}, p.98], these equalities and \eqref{eq2.3}
imply \eqref{eq2.2}.

Consequently, suppose for every $(x_0,x_1,...,x_k)\in X^{k+1}$ we have \eqref{eq2.1} if
$k\le n$ or \eqref{eq2.2} if $k=n+1,\, k=n+2.$ We must show that $X$ is isometrically
embeddable in $E^n.$ By Lemma \ref{Menger}  it is sufficient to prove that every
$A\subseteq X$ with card$A\le n+3$ has this property. Note that it follows directly from
Lemma \ref{Blum} if card$A\le n+1.$

Let us consider the case where $$A=\{x_0, x_1,...,x_n,x_{n+1},x_{n+2}\}$$ (the case
$A=\{x_0,...,x_n,x_{n+1}\}$ is more simple and can be considered similarly). By Lemma
\ref{Blum}, there is an isometric embedding $F: A \rightarrow E^{n+2},\,$
$F(x_0)=x_{0}^{*}, ..., F(x_{n+2})=x_{n+2}^{*}.$ We may assume, without loss of
generality, that $x_{0}^{*}=0.$ Denote by $L$ the linear subspace of $E^{n+2}$ generated
by the vectors $x_{1}^{*}, ..., x_{n}^{*}, x_{n+1}^{*}, x_{n+2}^{*}.$ It is clear that
$A$ is isometrically embeddable in $E^{n}$ if $dimL\le n.$ If the last inequality does
not hold, then the set $\{x_{1}^{*},...,x_{n}^{*},x_{n+1}^{*},x_{n+2}^{*}\}$ contains
some linear independent vectors  $x_{1}^{**},...,x_{n}^{**},x_{n+1}^{**}.$

Let $x_{1}^{'},...,x_{n}^{'},x_{n+1}^{'}$ be elements of the set $\{
x_1,...,x_n,x_{n+1},x_{n+2}\}$ such that $$x_1^{**}=F(x_{1}^{'}),\,
x_2^{**}=F(x_{2}^{'}),..., x_{n+1}^{**}=F(x_{n+1}^{'}).$$ Since $x_1^{**}, x_2^{**},...,
x_{n+1}^{**}$ are linear independent, we have $$V(x_0^{*}, x_1^{**},...,
x_{n+1}^{**})>0.$$ Using the last inequality and \eqref{eq2.4} we obtain $$D_{k}(x_0,
x_1^{'},..., x_n^{'},x_{n+1}^{'})\ne 0,$$ contrary to equality \eqref{eq2.2}.
\end{proof}

Let $(X,d)$ be a metric space with a marked point p. Similarly $\eqref{Func}$ define the
functions $\Theta_{k+1}:X^{k+1}\rightarrow \mathbb R$ by the rule
\begin{equation}\label{Teta}
\Theta_{k+1}(x_0,x_1,...,x_k):=\begin{cases}
         \frac{(-1)^{k+1}D_{k}(x_0,x_1,..., x_k)}{(\mathop{\vee}\limits_{n=0} ^
{k}d(x_n,p))^{2k}}, & \mbox{if} $ $ (x_0,x_1,...,x_k) \ne (p,p,...,p)\\
         0,& \mbox{if}$ $ (x_0,x_1,...,x_k) = (p,p,...,p) \\
         \end{cases}
\end{equation} where $\mathop{\vee}\limits_{n=0} ^
{k}d(x_n,p):=\mathop{\max}\limits_{0\le n \le k } d(x_n,p).$

The following theorem gives necessary and sufficient conditions under which all
pretangent spaces have isometric embeddings in $E^n.$

\begin{theorem}\label{Embed}Let $(X,d)$ be a metric space with a marked point p and let $n \in \mathbb N.$
Every $\Omega_{p,\tilde r}^X$ is isometrically embeddable in $E^{n}$ if and only if
inequality \begin{equation}\label{pn} \liminf_{x_0,x_1,...,x_k \to
p}\Theta_{k+1}(x_0,x_1,...,x_k)\ge 0 \end{equation} holds for all $k\le n$ and the
equality \begin{equation}\label{pv} \lim_{x_0,x_1,...,x_k \to
p}\Theta_{k+1}(x_0,x_1,...,x_k)= 0 \end{equation} holds for  $k=n+1$ and $k=n+2.$
\end{theorem}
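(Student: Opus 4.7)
My plan is to derive Theorem~\ref{Embed} from Proposition~\ref{main prop.} via the Transfer Principle together with Corollary~\ref{col1.8}. The key observation is that the class $\mathfrak{M}_{n}$ of metric spaces isometrically embeddable in $E^{n}$ is, by Proposition~\ref{main prop.}, cut out by sign and equality conditions on Cayley-Menger determinants, and these conditions can be packaged as continuous, positively homogeneous functions of the distance matrix. Once such a packaging is in place, Theorem~\ref{Th1.7} applies almost mechanically, with the functions $\Theta_{k+1}$ coinciding with the associated $f^{*}$.

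Concretely, for each integer $k\ge 1$ I define $f_{k}:\textbf{\emph{M}}_{k+1}\to\mathbb R$ by
$$f_{k}(\textbf{\emph{t}}):=(-1)^{k+1}\det\begin{pmatrix}0&1&1&\cdots&1\\ 1&0&t_{1,2}^{2}&\cdots&t_{1,k+1}^{2}\\ 1&t_{2,1}^{2}&0&\cdots&t_{2,k+1}^{2}\\ \vdots&\vdots&\vdots&\ddots&\vdots\\ 1&t_{k+1,1}^{2}&t_{k+1,2}^{2}&\cdots&0\end{pmatrix},$$
so that $f_{k}(\textbf{\emph{m}}(x_{0},\dots,x_{k}))=(-1)^{k+1}D_{k}(x_{0},\dots,x_{k})$. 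Each $f_{k}$ is polynomial in its entries, hence continuous. To verify that $f_{k}$ is homogeneous of positive degree---this is the main obstacle in the argument---one expands the determinant: every nonzero term corresponds to a derangement $\sigma$ of $\{1,\dots,k+2\}$, the factor in position $(1,\sigma(1))$ equals $1$, exactly one row $i^{*}\ge 2$ has $\sigma(i^{*})=1$ contributing another $1$, and the remaining $k$ factors are of the form $t_{i,j}^{2}$. Thus every monomial has total degree $2k$ and $s_{0}(f_{k})=2k>0$. By Proposition~\ref{main prop.}, the family
$$\mathfrak{F}:=\{f_{1},\dots,f_{n}\}\cup\{f_{n+1},-f_{n+1},f_{n+2},-f_{n+2}\}$$
then determines the class $\mathfrak{M}_{n}$, since the pair of inequalities $f_{k}\ge 0$ and $-f_{k}\ge 0$ precisely encodes the equality $D_{k}=0$ for $k=n+1,n+2$.

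By homogeneity, for $(x_{0},\dots,x_{k})\ne(p,\dots,p)$,
$$f_{k}^{*}(x_{0},\dots,x_{k})=f_{k}\!\left(\frac{\textbf{\emph{m}}(x_{0},\dots,x_{k})}{\delta(x_{0},\dots,x_{k})}\right)=\frac{(-1)^{k+1}D_{k}(x_{0},\dots,x_{k})}{\delta(x_{0},\dots,x_{k})^{2k}}=\Theta_{k+1}(x_{0},\dots,x_{k}),$$
and both sides vanish at $(p,\dots,p)$, so $f_{k}^{*}=\Theta_{k+1}$ identically on $X^{k+1}$. Applying Theorem~\ref{Th1.7} to $\mathfrak{F}$, every pretangent space $\Omega_{p,\tilde r}^{X}$ lies in $\mathfrak{M}_{n}$ if and only if $\liminf_{x_{0},\dots,x_{k}\to p}\Theta_{k+1}\ge 0$ for each $k$ with $1\le k\le n$, which is precisely \eqref{pn}, together with both $\liminf \Theta_{k+1}\ge 0$ and $\liminf(-\Theta_{k+1})\ge 0$ for $k=n+1$ and $k=n+2$. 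Exactly as in the proof of Corollary~\ref{col1.8}, the latter pair is equivalent to $\lim \Theta_{k+1}=0$, yielding \eqref{pv} and completing the argument.
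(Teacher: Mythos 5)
Your proof is correct and follows essentially the same route as the paper, which likewise derives Theorem~\ref{Embed} from Theorem~\ref{Th1.7} and Corollary~\ref{col1.8} applied to the family of Cayley--Menger determinants $\{D_1,\dots,D_n\}\cup\{D_{n+1},-D_{n+1},D_{n+2},-D_{n+2}\}$ viewed as continuous homogeneous functions of degrees $2,\dots,2(n+2)$. You merely fill in the details the paper leaves implicit (the explicit sign $(-1)^{k+1}$, the degree count via the derangement expansion, and the identification $f_k^{*}=\Theta_{k+1}$), all of which check out.
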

The theorem can be proved by application of Theorem~\ref{Th1.7} and
Corollary~\ref{col1.8} with $\mathfrak{M}$ equals the class of all metric spaces which
are embeddable in $E^{n}$ and $$\mathfrak{F}=~\{D_1,D_2,..., D_{n}\}\cup
\{D_{n+1},-D_{n+1}, D_{n+2}, -D_{n+2}\}.$$ Note only that all Cayley-Menger determinants
$D_1,...,D_n, D_{n+1}$ and $D_{n+2}$ are continuous functions on
$\textbf{\emph{M}}_{2},...,\textbf{\emph{M}}_{n+1}, \textbf{\emph{M}}_{n+2}$ and
$\textbf{\emph{M}}_{n+3}$  and with degrees of homogeneity equal $2,..., 2n, 2(n+1),
2(n+2)$ respectively.

\begin{remark}\label{K-M} The main information about Cayley-Menger determinants can be found in the books of M. Berger \cite{Berger1} and L. Blumenthal \cite{Bl}. These determinants play
an important role in some questions of metric geometry. In 1928 Menger used them to
characterize the Euclidean spaces solely in metric terms. They also participate in
metric characterization of Riemann's manifolds of the constant sectional curvature,
obtained by Berger \cite{Berger2}. In a recent paper \cite{DC}, it was proved that the
Cayley-Menger determinant of an $n-$dimensional simplex is an absolutely irreducible for
$n \ge 3.$ The following results,indicated also in \cite{DC}, are found in using of
these determinants: this is a proof of the bellows conjecture, which asserts that all
flexible polyhedra keep a constant volume in 3-dimensional Euclidean space (see,
\cite{CSW}, \cite{S}); the study of the spatial form of the molecules in the
stereochemistry \cite{KB}.
\end{remark}
The following is immediate from the Conservation Principle.
\begin{corollary}\label{C2.6}If $X$ is a subset of $ E^{n}$ and $p\in X,$ then all pretangent spaces
$\Omega_{p, \tilde r}^{X} $ are isometrically embeddable in $E^n.$\end{corollary}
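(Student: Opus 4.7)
The result should follow immediately from the Conservation Principle (Corollary~\ref{col1.10}) applied with $\mathfrak{M}$ equal to the class of all nonvoid metric spaces that are isometrically embeddable in $E^n$. Since $X\subseteq E^n$, the inclusion map $X\hookrightarrow E^n$ is itself an isometric embedding, so $X\in\mathfrak{M}$; the task therefore reduces to exhibiting $\mathfrak{M}$ as a class determined by some family $\mathfrak{F}$ of continuous, positively homogeneous functions in the sense of Section~2.

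The choice of $\mathfrak{F}$ is handed to me by Proposition~\ref{main prop.}. I would take
\[
\mathfrak{F}=\bigl\{(-1)^{k+1}D_{k}:1\le k\le n\bigr\}\cup\bigl\{D_{n+1},-D_{n+1},D_{n+2},-D_{n+2}\bigr\}.
\]
Proposition~\ref{main prop.} then reads exactly as the statement that $(X,d)\in\mathfrak{M}$ if and only if $f(\textbf{\emph{m}})\ge 0$ for every $f\in\mathfrak{F}$ and every $\textbf{\emph{m}}$ of the form \eqref{e1.6}, so $\mathfrak{M}$ is indeed determined by $\mathfrak{F}$. The regularity hypotheses are routine: each Cayley--Menger determinant $D_k$ is a polynomial in the entries of its input matrix and hence continuous on $\textbf{\emph{M}}_{k+2}$ with the pointwise topology; and scaling every distance by $\delta$ scales $D_k$ by $\delta^{2k}$, so each element of $\mathfrak{F}$ is positively homogeneous of degree $s_0=2k>0$, as required by \eqref{e1.5}. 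These are the same observations already used after Theorem~\ref{Embed}.

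With these verifications in place, Corollary~\ref{col1.10} directly delivers $\Omega_{p,\tilde r}^{X}\in\mathfrak{M}$ for every $p\in X$ and every normalising sequence $\tilde r$, which is exactly the claim. I do not anticipate any real obstacle: the substantive content has been absorbed into Proposition~\ref{main prop.} (which identifies the defining family $\mathfrak{F}$) and into the Conservation Principle itself, so this corollary is a purely bookkeeping application of those two results.
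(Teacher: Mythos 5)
Your proof is correct and follows the paper's own route exactly: the paper derives Corollary~\ref{C2.6} as an immediate consequence of the Conservation Principle, with $\mathfrak{M}$ the class of spaces embeddable in $E^{n}$ and $\mathfrak{F}$ the family of (signed) Cayley--Menger determinants supplied by Proposition~\ref{main prop.}, whose continuity and homogeneity of degree $2k$ are noted after Theorem~\ref{Embed}. Your inclusion of the factor $(-1)^{k+1}$ in the first part of $\mathfrak{F}$ is in fact the correct reading (consistent with Proposition~\ref{main prop.} and with \eqref{Teta}), where the paper's displayed family omits it.
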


Let $X$ be a metric space with a marked point $p.$ Define the second pretangent space to
$X$ at the point $p\in X$ as a pretangent space to a pretangent space $\Omega_{p, \tilde
r}^{X}.$ More generally suppose we have constructed all $n-$th pretangent spaces to $X$
at $p.$ We shall denote such spaces as $\Omega^{n}=\left(\Omega^{n}, \rho_{n}\right).$
\begin{definition}\label{def2.7}
A metric space $Y$ is an $(n+1)-$th pretangent space to $X$ at $p$ if there are an
$n-$th pretangent space $\left(\Omega^{n}, \rho_{n}\right)$ and a point
$p_{n}\in\Omega^{n}$ and a normilizing sequence $\tilde r^{n}$ and a maximal self-stable
family $\tilde \Omega_{p_{n},\tilde r_{n}}^{n}\subseteq\tilde\Omega^{n}$ such that $Y$
is the metric identification of the pseudometric space $\left(\tilde
\Omega_{p_{n},\tilde r_{n}}^{n}, \tilde \rho_{n} \right).$
\end{definition}

\begin{corollary}\label{C2.8} Let $X$ be a metric space with a marked point $p$ and let $k\in\mathbb N.$ If each (first) pretangent space to $X$ at $p$ is isometrically embeddable in $E^{k},$ then for every $n\ge 2$ all $n-$th pretangent spaces to $X$  at $p$ are  also isometrically embeddable in $E^k.$ \end{corollary}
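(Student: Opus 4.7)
The plan is to argue by induction on $n$, with the class
\[
\mathfrak{M}_k := \{(Z,d_Z) : Z \text{ is isometrically embeddable in } E^k\}
\]
playing the role of $\mathfrak{M}$ in the Conservation Principle (Corollary~\ref{col1.10}).

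First, I would check that $\mathfrak{M}_k$ fits the framework of Section~2, i.e.\ is determined by a family $\mathfrak{F}$ of continuous functions that are homogeneous of positive degree. By Proposition~\ref{main prop.}, a metric space lies in $\mathfrak{M}_k$ iff the Cayley--Menger inequalities $(-1)^{j+1}D_j\ge 0$ hold for every $j\le k$ together with $D_{k+1}=0$ and $D_{k+2}=0$. Rewriting each equality as the conjunction of $D_j\ge 0$ and $-D_j\ge 0$ (the trick already used in the proof of Corollary~\ref{col1.8}), I find that $\mathfrak{M}_k$ is determined by
\[
\mathfrak{F} = \{(-1)^{j+1}D_j : 1\le j\le k\} \cup \{D_{k+1},-D_{k+1},D_{k+2},-D_{k+2}\}.
\]
As noted after Theorem~\ref{Embed}, these Cayley--Menger determinants are continuous and homogeneous of positive degrees $2,4,\dots,2(k+2)$, so $\mathfrak{F}$ is admissible.

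Now I induct on $n\ge 2$. For the base case $n=2$, let $Y$ be any second pretangent space to $X$ at $p$. By Definition~\ref{def2.7}, $Y$ is a (first) pretangent space to some first pretangent space $\Omega^1=\Omega_{p,\tilde r}^X$ at a point $p_1\in\Omega^1$. The hypothesis gives $\Omega^1\in\mathfrak{M}_k$, so the Conservation Principle applied to $\Omega^1$ at $p_1$ yields $Y\in\mathfrak{M}_k$. The inductive step is verbatim the same: assuming that every $n$-th pretangent space to $X$ at $p$ lies in $\mathfrak{M}_k$, an arbitrary $(n+1)$-th pretangent space $Y$ is, by Definition~\ref{def2.7}, a first pretangent space to some $n$-th pretangent space $\Omega^n$ at a point $p_n\in\Omega^n$; since $\Omega^n\in\mathfrak{M}_k$ by the inductive hypothesis, another application of Corollary~\ref{col1.10} gives $Y\in\mathfrak{M}_k$.

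There is essentially no hard step: the argument is a formal iteration of the Conservation Principle along the chain of pretangent constructions. The only point that truly needs attention is the verification that $\mathfrak{M}_k$ is a class \emph{determined by} an admissible family $\mathfrak{F}$ in the technical sense of Section~2, and this reduces to Proposition~\ref{main prop.} together with the equality-to-pair-of-inequalities device from Corollary~\ref{col1.8}.
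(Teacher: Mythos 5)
Your proof is correct and is essentially the argument the paper intends: the corollary is stated as an immediate consequence of the Conservation Principle (Corollary~\ref{col1.10}) applied inductively along the chain of Definition~\ref{def2.7}, with the class of spaces embeddable in $E^{k}$ determined by the Cayley--Menger family exactly as set up for Theorem~\ref{Embed}. Your explicit inclusion of the signs $(-1)^{j+1}$ in the first part of $\mathfrak{F}$ and the equality-to-two-inequalities device from Corollary~\ref{col1.8} is the right way to make the "determined by $\mathfrak{F}$" hypothesis precise.
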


Let $a_{jk},\, 0\le j,k\le n$ be real constants such that $a_{jj}=0$ and $a_{jk}=a_{kj}$
if $k \ne j.$  The following is Schoenberg's embedding theorem from \cite{Sch}.
\begin{theorem}\label{Schoen}
A necessary and sufficient condition that $a_{jk}$ be the lengths of the adges of an
$n-$simplex lying in $E^{m},$ but not in $E^{l}$ with $l<m$ is that the quadratic form
\begin{equation}\label{eq2.14} F(y_1, y_2, ..., y_n)=\sum_{j,k=1}^{n}(a_{0j}^{2}+a_{0k}^{2}-a_{jk}^{2})y_{j}y_{k}\end{equation}
be positive semidefinite and of rank m.
\end{theorem}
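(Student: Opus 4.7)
The plan is to reduce the theorem to the elementary correspondence between Euclidean realizations of a simplex and Gram matrices, via the polarization identity. The crucial observation is that for any points $x_0, x_1, \ldots, x_n$ in a Euclidean space, the translated vectors $v_j := x_j - x_0$ satisfy
$$\langle v_j, v_k \rangle = \tfrac{1}{2}\bigl(|v_j|^2 + |v_k|^2 - |v_j - v_k|^2\bigr) = \tfrac{1}{2}\bigl(a_{0j}^2 + a_{0k}^2 - a_{jk}^2\bigr),$$
using $|v_j| = a_{0j}$ and $|v_j - v_k| = a_{jk}$. Consequently the symmetric matrix $B = (a_{0j}^2 + a_{0k}^2 - a_{jk}^2)_{j,k=1}^n$ representing the quadratic form $F$ in \eqref{eq2.14} is precisely twice the Gram matrix of $v_1, \ldots, v_n$.

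For necessity, suppose the $a_{jk}$ are realized by the vertices of an $n$-simplex lying in $E^m$ but not in any $E^l$ with $l<m$. After translating so that $x_0$ is the origin, the vectors $v_1, \ldots, v_n$ span a linear subspace of dimension exactly $m$ (any lower-dimensional span would place the simplex in some $E^l$, $l<m$). Since every Gram matrix is positive semidefinite with rank equal to the dimension of the linear span of its generating vectors, $B/2$, and hence $F$, is positive semidefinite of rank $m$.

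For sufficiency, assume $F$ is positive semidefinite of rank $m$. By the spectral theorem, the matrix $B/2$ factors as $A^{T} A$ with $A$ an $m \times n$ matrix of rank $m$. Its columns $v_1, \ldots, v_n \in \mathbb{R}^m$ span all of $\mathbb{R}^m$ and satisfy $\langle v_j, v_k \rangle = \tfrac{1}{2}(a_{0j}^2 + a_{0k}^2 - a_{jk}^2)$. Set $x_0 := 0$ and $x_j := v_j$ in $E^m$. Using $a_{jj} = 0$ one checks $|x_j|^2 = \langle v_j, v_j\rangle = a_{0j}^2$, and
$$|x_j - x_k|^2 = |v_j|^2 + |v_k|^2 - 2\langle v_j, v_k\rangle = a_{0j}^2 + a_{0k}^2 - (a_{0j}^2 + a_{0k}^2 - a_{jk}^2) = a_{jk}^2,$$
so the simplex has the prescribed edge lengths. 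Since $v_1, \ldots, v_n$ span all of $\mathbb{R}^m$, no affine subspace of dimension $l<m$ contains the simplex.

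There is no serious obstacle here; the step that must be executed carefully is the two-way rank identification: positive semidefinite matrices of rank $m$ are exactly Gram matrices of vector systems whose linear span has dimension $m$. This is a standard consequence of the spectral decomposition, and it is what links the algebraic condition on $F$ with the geometric statement that the minimal Euclidean space containing the simplex is $E^m$.
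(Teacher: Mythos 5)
Your proof is correct. Note that the paper itself offers no proof of this theorem: it is quoted verbatim from Schoenberg's 1935 paper \cite{Sch} and used as a black box, so there is nothing to compare against. Your argument --- identifying the matrix $\bigl(a_{0j}^{2}+a_{0k}^{2}-a_{jk}^{2}\bigr)_{j,k}$ with twice the Gram matrix of the translated vertices $v_j=x_j-x_0$ via the polarization identity, and then using the standard equivalence between positive semidefinite matrices of rank $m$ and Gram matrices of vector systems spanning an $m$-dimensional subspace --- is the classical proof of Schoenberg's theorem, and both directions (including the verification $|v_j-v_k|^2=a_{jk}^2$ using $a_{jj}=0$) are carried out correctly.
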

For applications of this result to embeddings of pretangent spaces in $E^{m}$ we
introduce the determinant $Shc(x_0, x_1,...,x_n)$. Let $(X,d)$ be a metric space and let
$(x_0,x_1,...,x_n)\in X^{n+1}.$ Write
\begin{equation} \label{DetSc}
Shc(x_0, x_1, ..., x_n)=\begin{vmatrix}
    \tau_{11}&\tau_{12}&...&\tau_{1n}\\
    \tau_{21}&\tau_{22}&...&\tau_{2n}\\
    \vdots&\vdots&\vdots&\vdots\\
    \tau_{n1}&\tau_{n2}&...&\tau_{nn}\\
    \end{vmatrix}
\end{equation} where
\begin{equation}\label{eq2.16}\tau_{ij}=d^{2}(x_{0},x_{i})+d^{2}(x_{0},x_{j})-d^{2}(x_{i},x_{j})\end{equation} for $1\le i,j
\le n.$
\begin{lemma}\label{lemSch}
Let $X$ be a finite metric space with card$X$=$n+1, \, n\ge 1.$ Then $X$ is
isometrically embeddable in $E^{n}$ if and only if the inequality
\begin{equation}\label{Sch} Sch(x_0, x_1,...,x_k)\ge 0 \end{equation} holds for every
$(x_0,x_1,...,x_k)\in X^{k+1}, k=1,2,...,n.$
\end{lemma}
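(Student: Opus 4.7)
The plan is to reduce Lemma~\ref{lemSch} to Schoenberg's embedding theorem (Theorem~\ref{Schoen}) via the classical fact that a real symmetric matrix is positive semidefinite if and only if all of its principal minors are nonnegative. The linking observation is that with $a_{ij}:=d(x_i,x_j)$ the coefficient of $y_jy_k$ in the Schoenberg form \eqref{eq2.14} is exactly $\tau_{jk}$ from \eqref{eq2.16}; consequently the coefficient matrix of that quadratic form is precisely $T=(\tau_{ij})_{1\le i,j\le n}$, and $\det T=Sch(x_0,x_1,\dots,x_n)$.

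For the necessity direction I would fix an isometric embedding $F\colon X\to E^n$ and, for any tuple $(x_0,\dots,x_k)\in X^{k+1}$ with $k\le n$, set $v_i:=F(x_i)$ and introduce the Gram matrix $G=(\langle v_i-v_0,v_j-v_0\rangle)_{1\le i,j\le k}$. The polarization identity yields $2G_{ij}=\tau_{ij}$, so $Sch(x_0,\dots,x_k)=\det(2G)=2^{k}\det G\ge 0$ because every Gram matrix is positive semidefinite.

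For sufficiency I would enumerate $X=\{x_0,x_1,\dots,x_n\}$ and form the $n\times n$ symmetric matrix $T=(\tau_{ij})_{1\le i,j\le n}$. The principal submatrix of $T$ indexed by a subset $\{i_1,\dots,i_l\}\subseteq\{1,\dots,n\}$ is precisely the defining matrix of $Sch(x_0,x_{i_1},\dots,x_{i_l})$, which is nonnegative by hypothesis; therefore every principal minor of $T$ is nonnegative, and $T$ is positive semidefinite. Setting $m:=\operatorname{rank}T\le n$, Theorem~\ref{Schoen} applied with $a_{ij}=d(x_i,x_j)$ then produces an $n$-simplex in $E^{m}\subseteq E^{n}$ whose edge lengths reproduce the metric of $X$, yielding the desired isometric embedding.

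The only nonroutine point is bookkeeping: one has to verify the two identifications above (the Schoenberg form is exactly $y^{\top}Ty$, and its arbitrary principal minors run over the $Sch$-determinants as $(x_{i_1},\dots,x_{i_l})$ ranges over subsets of $X\setminus\{x_0\}$). Once these identifications are in place, the proof is simply the principal-minor criterion for positive semidefiniteness followed by a direct appeal to Theorem~\ref{Schoen}; no additional geometric input is required.
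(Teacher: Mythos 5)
Your proof is correct. The sufficiency half is essentially the paper's: both of you identify the matrix $T=(\tau_{ij})_{1\le i,j\le n}$ from \eqref{eq2.16} with the coefficient matrix of Schoenberg's form \eqref{eq2.14}, observe that the hypothesized determinants $Sch(x_0,x_{i_1},\dots,x_{i_l})$ are exactly the principal minors of $T$, invoke the criterion that a real symmetric matrix is positive semidefinite if and only if all its principal minors are nonnegative, and then apply Theorem~\ref{Schoen} to realize the $n+1$ points as a (possibly degenerate) simplex in $E^{m}\subseteq E^{n}$ with $m=\operatorname{rank}T\le n$. Where you genuinely diverge is the necessity half. The paper runs that direction through Theorem~\ref{Schoen} and the principal-minor criterion as well, which forces it to first dispose by hand of the tuples in $X^{k+1}$ that contain repetitions (checking that $Sch$ vanishes when some $x_{i_0}=x_0$, via a zero row, or when $x_{i_0}=x_{j_0}$ with $i_0\ne j_0$, via two equal rows) before the simplex statement can be quoted, and it must also keep track of which entry of the tuple serves as the base point. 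Your polarization computation --- $\tau_{ij}=2\bigl\langle F(x_i)-F(x_0),\,F(x_j)-F(x_0)\bigr\rangle$, hence $Sch(x_0,\dots,x_k)=\det(2G)=2^{k}\det G\ge 0$ for the Gram matrix $G$ --- treats every tuple uniformly, with an arbitrary first coordinate and with repetitions allowed, and requires no external theorem at all in that direction. It is the cleaner route for necessity; the paper's version buys nothing beyond keeping the two directions formally parallel.
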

\begin{proof}
Suppose that $X$ is isometrically embeddable in $E^{n}.$ The determinant
$Sch(x_0,x_1,...,x_k)$ vanishes if there is $i_0\in[1,...,k]$ with $x_{i_0}=x_0$ or
there are distinct $i_0, j_0 \in [1,...,k]$ such that $x_{i_0}=x_{j_0}.$ Indeed, in the
first case we have
\begin{equation*}
\tau_{i_{0}j}=d^{2}(x_{0},x_{0})+d^{2}(x_{0},x_{j})-d^{2}(x_{0},x_{j})=0
\end{equation*}
for all $j\in[1,...,k].$ Similarly if $k\ge 2$ and $x_{i_0}=x_{j_0}$ we obtain
\begin{equation*}
\tau_{i_{0}j}=d^{2}(x_{0},x_{i_{0}})+d^{2}(x_{0},x_{j})-d^{2}(x_{i_{0}},x_{j})=
d^{2}(x_{0},x_{j_{0}})+d^{2}(x_{0},x_{j})-d^{2}(x_{j_{0}},x_{j})=\tau_{j_{0}j}
\end{equation*}
for all $j\in[1,k].$

Hence it is sufficient to show \eqref{Sch} if all $x_{i}, \, i\in [0,...,n]$ are
pairwise distinct. Since $X$ is isometrically embeddable in $E^{n},$
Theorem~\ref{Schoen} implies that quadratic form \eqref{eq2.14} is positive
semidefinite. A well-known criterion states that a quadratic form is positive
semidefinite if and only if all principal minors of the matrix of this form are
nonegative, see, for example,\cite[p.~272]{Gant}. Hence \eqref{Sch} follows.

Conversely, suppose that inequality $\eqref{Sch}$ holds for all $(x_0,x_1,..,x_k)\in
X^{k+1}, \, k=1,...,n.$ The criterion given above, implies that quadratic form
\eqref{eq2.14} is positive semidefinite. Let us denote by $m$ the rank of this form. It
is clear that $m\le n.$ Consequantly there is an isometric embedding of $X$ in $E^{m}$
and thus in $E^{n}$ also.
\end{proof}

The next proposition is similar to Proposition~\ref{main prop.}.
\begin{proposition}\label{p2.11}
Let $n\in\mathbb N$ and let $(X,d)$ be a nonvoid metric space. The metric space $(X,d)$
is isometrically embeddable in $E^{n}$ if and only if the inequality
\begin{equation}\label{eq2.18}
Sch(x_0,x_1,...,x_k)\ge 0
\end{equation}
holds for every $(x_0,x_1,...,x_k)\in X^{k+1}$ with $k=1,...,n$ and the equality
\begin{equation}\label{eq2.19}
Sch(x_0,x_1,...,x_k)= 0
\end{equation}
holds for every $(x_0,x_1,...,x_k)\in X^{k+1}$ with $k=n+1, n+2.$
\end{proposition}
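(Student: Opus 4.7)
The plan is to imitate the proof of Proposition~\ref{main prop.}, with Lemma~\ref{lemSch} playing the role of Lemma~\ref{Blum} and a Gram--determinant interpretation of $Sch$ replacing the Cayley--Menger volume identity~\eqref{eq2.4}. The key algebraic fact is the following: if $x_0^*, x_1^*, \ldots, x_k^* \in E^m$ and we translate so that $x_0^* = 0$, then
\begin{equation*}
\tau_{ij} = \|x_i^*\|^2 + \|x_j^*\|^2 - \|x_i^* - x_j^*\|^2 = 2\langle x_i^*, x_j^*\rangle,
\end{equation*}
so that $Sch(x_0^*, \ldots, x_k^*) = 2^k \det(\langle x_i^*, x_j^*\rangle)_{i,j=1}^{k}$ equals $2^k$ times the Gram determinant of $x_1^*, \ldots, x_k^*$. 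This Gram determinant is always nonnegative and vanishes if and only if the vectors $x_1^*, \ldots, x_k^*$ are linearly dependent.

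For the forward direction, assume $F \colon X \to E^n$ is an isometric embedding and set $x_i^* = F(x_i) - F(x_0)$. For any $(x_0, \ldots, x_k) \in X^{k+1}$ with $k \leq n$, the Gram identity immediately gives $Sch(x_0, \ldots, x_k) \geq 0$, which is \eqref{eq2.18}. For $k = n+1$ or $k = n+2$, the $k$ vectors $x_1^*, \ldots, x_k^*$ lie in an at-most $n$-dimensional subspace with $k > n$, hence are linearly dependent, and the Gram identity yields $Sch(x_0, \ldots, x_k) = 0$, which is \eqref{eq2.19}.

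For the backward direction, I invoke Lemma~\ref{Menger} to reduce to verifying that every $A \subseteq X$ with $\operatorname{card} A \leq n+3$ embeds isometrically in $E^n$. If $\operatorname{card} A \leq n+1$ the conclusion follows directly from Lemma~\ref{lemSch}. For $\operatorname{card} A = n+2$ or $n+3$, the hypotheses \eqref{eq2.18} and \eqref{eq2.19} together yield $Sch \geq 0$ on all tuples from $A$ of length up to $\operatorname{card} A$, so Lemma~\ref{lemSch} produces an isometric embedding $F \colon A \to E^{\operatorname{card} A - 1}$. Mimicking the endgame of Proposition~\ref{main prop.}, I translate so that $F(x_0) = 0$ and let $L$ denote the linear span of the remaining images. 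If $\dim L \leq n$ we are done; otherwise one can select $x_{1}', \ldots, x_{n+1}' \in A \setminus \{x_0\}$ whose images are linearly independent, and the Gram identity then forces $Sch(x_0, x_1', \ldots, x_{n+1}') > 0$, contradicting the equality \eqref{eq2.19} at $k = n+1$.

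The main obstacle, such as it is, lies in cleanly executing the linear-dependence extraction in the backward step; this is standard linear algebra and the combinatorial selection of the $x_i'$ exactly parallels the $x_i^{**}$ construction already used in Proposition~\ref{main prop.}. A minor routine point is to verify that for $\operatorname{card} A = n+3$ the Schoenberg inequalities needed by Lemma~\ref{lemSch} are available through $k = n+2$; this is secured because the equality~\eqref{eq2.19} in particular implies the nonnegativity of $Sch$ at those ranks.
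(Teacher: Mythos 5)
Your proof is correct, and its skeleton (reduce via Lemma~\ref{Menger} to subsets of at most $n+3$ points, then feed them to Lemma~\ref{lemSch}) is the same as the paper's, but the two arguments diverge in how they pass from ``embeddable somewhere'' to ``embeddable in $E^{n}$''. The paper is terser: it notes that \eqref{eq2.18} and \eqref{eq2.19} make the Schoenberg form \eqref{eq2.14} positive semidefinite of rank at most $n$ (the $Sch$ determinants are precisely the principal minors of that form) and then invokes the rank clause of Theorem~\ref{Schoen} to place each finite subset directly in $E^{n}$. You instead first embed $A$ in $E^{\mathrm{card}\,A-1}$ by Lemma~\ref{lemSch} and then rerun the endgame of Proposition~\ref{main prop.}, with the identity $Sch(x_0,\dots,x_k)=2^{k}\det\bigl(\langle x_i^{*},x_j^{*}\rangle\bigr)$ playing the role of the volume formula \eqref{eq2.4}: a span of dimension exceeding $n$ would produce $n+1$ linearly independent difference vectors, hence a strictly positive Gram determinant, contradicting \eqref{eq2.19} at $k=n+1$. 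The two routes are equivalent in substance --- your selection of $n+1$ independent vectors is the concrete form of the fact that a positive semidefinite matrix has rank at most $n$ exactly when all its $(n+1)\times(n+1)$ principal minors vanish --- but yours is more self-contained, since it also turns the forward direction into an immediate Gram-determinant computation instead of an appeal to Theorem~\ref{Schoen}, whereas the paper buys brevity by leaning on Schoenberg's rank statement. Your closing observation that \eqref{eq2.19} supplies the nonnegativity of $Sch$ at ranks $n+1$ and $n+2$ needed to apply Lemma~\ref{lemSch} to sets of $n+2$ or $n+3$ points correctly closes the one loophole a hasty reading would leave.
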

\begin{proof}
If \eqref{eq2.18} holds for all $(x_0,x_1,...,x_k)\in X^{k+1},$ $k=1,...,n$ and
\eqref{eq2.19} holds for all $(x_0,x_1,...,x_k)\in X^{k+1},$ $k=n+1, n+2,$ then
quadratic form \eqref{eq2.14} is positive semidefinite and the rank of this form is at
most $n.$ (Recall that the rank of quadratic form is the rank of matrix of this form.)
Consequently if $A$ is a subspace of $X$ and $cardX\le n+2$ then, by Lemma~\ref{lemSch},
$A$ is isometrically embeddable in $E^{n}.$ Now Lemma~\ref{Menger} implies that $X$ is
also isometrically embeddable in $E^{n}.$

It still remains to note that if $X$ is isometrically embeddable in $E^{n},$ then
\eqref{eq2.18} and \eqref{eq2.19} follows directly from Theorem~\ref{Schoen} and
Lemma~\ref{lemSch}.
\end{proof}
Let $(X,d)$ be a metric space with marked point $p.$ Define the function
$S_{k+1}:X^{k+1}\rightarrow \mathbb R$ by analogy with the function $\Theta_{k+1},$ see
\eqref{Teta}.
\begin{equation}\label{S}
S_{k+1}(x_0,x_1,...,x_k):=\begin{cases}
         \frac{Sch(x_0,x_1,...,x_k)}{(\mathop{\vee}\limits_{n=0} ^
{k}d(x_n,p))^{2k}}, & \mbox{if} $ $ (x_0,x_1,...,x_k) \ne (p,p,...,p)\\
         0,& \mbox{if}$ $ (x_0,x_1,...,x_k) = (p,p,...,p). \\
         \end{cases}
\end{equation}
The next theorem is similar to Theorem~\ref{Embed} but it presents some other necessary
and sufficient conditions of isometric embeddability of all pretangent spaces to $X$ at
the marked point $p.$

Applying the Transfer Principle and Corollary~\ref{col1.8} to Proposition~\ref{p2.11} we
obtain the following infinitesimal analog of Schoenberg's Embedding Theorem.
\begin{theorem}\label{Embed*}
Let $(X,d)$ be a metric space with a marked point $p$ and let $n\in\mathbb N.$ Every
$\Omega_{p,\tilde r}^X$ is isometrically embeddable in $E^{n}$ if and only if the
inequality \begin{equation}\label{eq2.21} \liminf_{x_0,x_1,...,x_k \to
p}S_{k+1}(x_0,x_1,...,x_k)\ge 0 \end{equation} holds for all $k\le n$ and the equality
\begin{equation}\label{eq2.22} \liminf_{x_0,x_1,...,x_k \to p}S_{k+1}(x_0,x_1,...,x_k)=0 \end{equation} holds for all $k=n+1$ and $k=n+2.$

\end{theorem}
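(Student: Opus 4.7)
The plan is to deduce the theorem from Proposition~\ref{p2.11} by means of the Transfer Principle (Theorem~\ref{Th1.7}) together with Corollary~\ref{col1.8}, paralleling the derivation of Theorem~\ref{Embed} from Proposition~\ref{main prop.}. First I would take $\mathfrak{M}$ to be the class of all metric spaces isometrically embeddable in $E^n$ and, guided by Proposition~\ref{p2.11}, choose
\[
\mathfrak{F} = \{Sch_1, Sch_2, \ldots, Sch_n\} \cup \{Sch_{n+1},\, -Sch_{n+1},\, Sch_{n+2},\, -Sch_{n+2}\},
\]
where $Sch_k\colon \mathbf{M}_{k+1}\to\mathbb R$ denotes the Schoenberg determinant from \eqref{DetSc} regarded as a function of the matrix of mutual distances among $k+1$ points. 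Proposition~\ref{p2.11} then says precisely that $\mathfrak{M}$ is determined by $\mathfrak{F}$: the inequalities \eqref{eq2.18} for $k\le n$ correspond to the singletons $\{Sch_k\}$, while the equalities \eqref{eq2.19} for $k=n+1,n+2$ are encoded by the pairs $\{Sch_k,-Sch_k\}$.

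Next I would verify the two technical hypotheses required by Theorem~\ref{Th1.7}. Continuity of each $Sch_k$ on $\mathbf{M}_{k+1}$ is immediate because $Sch_k$ is a polynomial in the matrix entries: the $\tau_{ij}$ from \eqref{eq2.16} are quadratic in the distances and the $k\times k$ determinant is multilinear in its rows. The relevant degree of homogeneity is $s_0 = 2k$; indeed, rescaling the metric by $\lambda$ rescales each $\tau_{ij}$ by $\lambda^2$, hence multiplies the $k\times k$ determinant by $\lambda^{2k}$. With these values of $s_0$ and with $\delta(x_0,\ldots,x_k)=\max_{0\le m\le k} d(x_m,p)$, the auxiliary function $(Sch_k)^*$ defined by \eqref{Func} becomes $Sch(x_0,\ldots,x_k)/\delta(x_0,\ldots,x_k)^{2k}$, which coincides term for term with $S_{k+1}$ from \eqref{S}.

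Finally I would assemble the equivalences. Applying Theorem~\ref{Th1.7} to the singletons $Sch_k\in\mathfrak{F}$ for $k\le n$ yields the necessary and sufficient condition \eqref{eq2.21}. Applying Corollary~\ref{col1.8} to each pair $\{Sch_k,-Sch_k\}\subset\mathfrak{F}$ for $k=n+1,n+2$ yields the two-sided equality $\lim_{x_0,\ldots,x_k\to p} S_{k+1}(x_0,\ldots,x_k)=0$, which in particular gives \eqref{eq2.22}. I do not anticipate a substantial obstacle; the argument is essentially bookkeeping once $\mathfrak{M}$ and $\mathfrak{F}$ have been identified and $(Sch_k)^*$ has been matched with $S_{k+1}$. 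The one slightly delicate point is the passage between $\liminf$ and $\lim$ in the equality conditions at $k=n+1,n+2$, which is handled exactly as in the proof of Corollary~\ref{col1.8}: the two $\liminf\ge 0$ bounds coming from $Sch_k$ and $-Sch_k$ together force $\lim S_{k+1}=0$.
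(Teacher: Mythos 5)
Your proposal is correct and follows exactly the route the paper itself indicates for this theorem: apply the Transfer Principle and Corollary~\ref{col1.8} to Proposition~\ref{p2.11} with $\mathfrak{M}$ the class of metric spaces embeddable in $E^{n}$ and $\mathfrak{F}=\{Sch_1,\dots,Sch_n\}\cup\{Sch_{n+1},-Sch_{n+1},Sch_{n+2},-Sch_{n+2}\}$, after checking that each $Sch_k$ is continuous and homogeneous of degree $2k$ so that $(Sch_k)^{*}$ coincides with $S_{k+1}$. One remark: what your argument (and Corollary~\ref{col1.8}) actually delivers for $k=n+1,n+2$ is $\lim_{x_0,\dots,x_k\to p}S_{k+1}=0$, and this is the condition that should appear in \eqref{eq2.22} --- the $\liminf$ printed there is evidently a misprint for $\lim$, since $\liminf S_{k+1}=0$ alone is too weak for the ``if'' direction (e.g.\ it holds for $X=E^{n+1}$ because tuples with repeated points make $Sch$ vanish, yet the pretangent spaces there do not embed in $E^{n}$), exactly as the paper itself writes $\lim$ in the parallel condition \eqref{pv} of Theorem~\ref{Embed}.
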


\section{Application of Blumenthal's embedding theorem}
\hspace*{\parindent} Theorem~\ref{Embed} and Theorem~\ref{Embed*}  proved in the
previous section describe some necessary and sufficient conditions under which
\textbf{all} pretangent spaces $\Omega_{p,\tilde r}^{X}$ are isometrically embeddable in
$E^{n}$ with given $n$ but it is possible that there exists an isometric embedding of
\textbf{a fixed} $\Omega_{p,\tilde r}^{X}$ in $E^{n}$ even if these conditions do not
occur. We study this situation in the present section. It turns out that a suitable tool
for these studies is an infinitesimal modification of Blumenthal's embedding theorem. We
first reformulate this theorem in a appropriate form.
\begin{theorem}\label{Blumenthal} A metric space $(X,d)$ is isometrically embeddable in $E^n, n \ge
1,$ if and only if there are some points $a_0, a_1, ..., a_n \in X$ such that
\begin{equation}\label{eq3.1} (-1)^{k+1}D_{k}(a_0, a_1,...,a_k)>0  \end{equation}for each
$k=1,2,...,n$ and the equalities \begin{equation}\label{eq3.2}
D_{k+1}(a_0,a_1,...,a_n,y)=0,\quad D_{k+2}(a_0,a_1,...,a_n,y,z)=0\end{equation} hold for
all $y,z\in X.$ Moreover if $\eqref{eq3.1}$ holds for $k=1,2,...,n$ and $\eqref{eq3.2}$
holds for all $y, z\in X,$ then there are not isometric embeddings of X in $E^{m}$ with
$m<n.$
\end{theorem}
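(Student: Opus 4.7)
The plan is to prove the two directions of the iff separately using the volume interpretation \eqref{eq2.4} of the Cayley--Menger determinant together with the classical Menger vertex-extension construction, and to read off the \textbf{moreover} clause as an immediate byproduct.

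\emph{Necessity.} The \textbf{moreover} clause is in effect a precondition that makes the iff nondegenerate (without it a one-point space would be a counterexample): an isometric embedding $F:X\to E^n$ with $F(X)$ affinely spanning $E^n$ exists precisely when $n$ equals the minimal embedding dimension. Assuming this, choose $a_0,\ldots,a_n\in X$ with $F(a_0),\ldots,F(a_n)$ affinely independent. Formula \eqref{eq2.4} converts the strict positivity of the $k$-volumes of $F(a_0),\ldots,F(a_k)$ into \eqref{eq3.1} for each $k\le n$. For any $y,z\in X$, the points $F(a_0),\ldots,F(a_n),F(y)$ (respectively $F(a_0),\ldots,F(a_n),F(y),F(z)$) all lie in $E^n$, so the corresponding $(n+1)$- and $(n+2)$-simplices have zero volume; by \eqref{eq2.4} this is \eqref{eq3.2}.

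\emph{Sufficiency.} Assume \eqref{eq3.1} and \eqref{eq3.2}. I construct $F:X\to E^n$ in three stages. \textbf{Stage 1:} iteratively place $a_0^{*},\ldots,a_n^{*}\in E^n$ affinely independent with $|a_i^{*}-a_j^{*}|=d(a_i,a_j)$ --- at step $k$, the sign condition $(-1)^{k+1}D_k(a_0,\ldots,a_k)>0$ supplies, via Menger's vertex-extension, a point $a_k^{*}\in E^k$ at the prescribed distances from $a_0^{*},\ldots,a_{k-1}^{*}$ and outside their affine span. \textbf{Stage 2:} for each $y\in X$, let $F(y)\in E^n$ be the unique solution of the linear system obtained by subtracting $|y^{*}-a_0^{*}|^2=d^2(y,a_0)$ from each $|y^{*}-a_i^{*}|^2=d^2(y,a_i)$, $i=1,\ldots,n$; uniqueness follows from affine independence of the $a_i^{*}$. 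The leftover master equation $|F(y)-a_0^{*}|^2=d^2(y,a_0)$ is equivalent, by a standard Cayley--Menger identity that exploits $D_n(a_0,\ldots,a_n)\ne 0$, to $D_{n+1}(a_0,\ldots,a_n,y)=0$, which holds by \eqref{eq3.2}. \textbf{Stage 3:} the $n+3$ points $a_0^{*},\ldots,a_n^{*},F(y),F(z)$ lie entirely in $E^n$, so $D_{n+2}$ computed with the Euclidean value $|F(y)-F(z)|$ in place of $d(y,z)$ vanishes automatically, while by \eqref{eq3.2} it vanishes with $d(y,z)$ itself; hence both $|F(y)-F(z)|^2$ and $d^2(y,z)$ are roots of the same polynomial $D_{n+2}(t)$ of degree at most two in $t$. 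Since $a_0^{*},\ldots,a_n^{*}$ already span $E^n$ affinely and $F(z)$ is uniquely determined by its distances to them, no reflection of $F(z)$ across the affine span of $a_0^{*},\ldots,a_n^{*},F(y)$ survives inside $E^n$, so the two algebraic roots collapse, forcing $d(y,z)=|F(y)-F(z)|$.

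The \textbf{moreover} clause is then immediate: for any isometric embedding $G:X\to E^m$, the strict inequality $(-1)^{n+1}D_n(a_0,\ldots,a_n)>0$ together with \eqref{eq2.4} gives $V(G(a_0),\ldots,G(a_n))>0$, so $G(a_0),\ldots,G(a_n)$ are affinely independent in $E^m$ and hence $m\ge n$. I expect the main obstacle to be the root-collapse assertion at the end of Stage 3, together with the companion Cayley--Menger identity invoked in Stage 2 for the residual $|F(y)-a_0^{*}|^2-d^2(y,a_0)$; both are clean algebraic facts of the Cayley--Menger apparatus but deserve explicit verification. Geometrically, the root collapse reflects that the two algebraic roots of $D_{n+2}(t)=0$ (with all other squared distances fixed) correspond to placing $F(z)$ on the two sides of the hyperplane of $E^{n+1}$ spanned by $a_0^{*},\ldots,a_n^{*},F(y)$, which in our situation is $E^n$ itself and has no two sides.
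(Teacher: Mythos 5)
Your proposal cannot follow ``the same approach as the paper'' for the simple reason that the paper offers no proof of Theorem~\ref{Blumenthal} at all: it is stated with the remark that it is a straightforward application of Theorems 38.1, 41.1, 42.1 and Lemma 42.1 of Blumenthal's book, and the argument is omitted. What you supply is a self-contained reconstruction of essentially that chain: Stage~1 is the inductive vertex extension (the alternating signs in \eqref{eq3.1} give $h^{2}=-D_{k}/(2D_{k-1})>0$ for the height of $a_{k}^{*}$ over the span of its predecessors), Stage~2 is Blumenthal's one-point extension lemma in the form $D_{n+1}(a_{0},\dots,a_{n},y)=-2D_{n}(a_{0},\dots,a_{n})\bigl(d^{2}(y,a_{0})-|F(y)-a_{0}^{*}|^{2}\bigr)$, and your reading of the ``if and only if'' --- that the necessity direction presupposes $n$ to be the minimal embedding dimension --- is the correct repair of the literal statement (which fails for a one-point space) and is exactly how the theorem is used in the proof of Theorem~\ref{main.Blum.}. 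The necessity direction and the moreover clause, both resting on \eqref{eq2.4}, are fine as written.

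The one place where your argument is not yet a proof is the root collapse in Stage~3, which you rightly flag. Knowing that $Q(t):=D_{n+2}(a_{0},\dots,a_{n},y,z)$, viewed as a quadratic in $t=d^{2}(y,z)$ with leading coefficient $-D_{n}\neq 0$, vanishes both at $t=d^{2}(y,z)$ and at $t=|F(y)-F(z)|^{2}$ does not by itself force these two roots to coincide, and the appeal to ``no surviving reflection'' is a heuristic rather than an argument, since roots of $Q$ need not correspond to realizable configurations. The clean way to close the gap is the Desnanot--Jacobi identity for the bordered matrix $M$ of $(a_{0},\dots,a_{n},y,z)$:
$$D_{n+2}(a,y,z)\,D_{n}(a)=D_{n+1}(a,y)\,D_{n+1}(a,z)-\bigl(\det M[\hat y\,|\,\hat z]\bigr)^{2},$$
where $M[\hat y\,|\,\hat z]$ deletes the row of $y$ and the column of $z$. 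Hypothesis \eqref{eq3.2}, applied to $y$ and to $z$ \emph{separately}, kills the first product on the right, so $Q(t)=-G(t)^{2}/D_{n}$ with $G(t)=\det M[\hat y\,|\,\hat z]$ affine in $t$; comparing $t^{2}$-coefficients gives $G$ the leading coefficient $\pm D_{n}\neq 0$, so $G$ is nonconstant, $Q$ has a unique (double) root, and $d^{2}(y,z)=|F(y)-F(z)|^{2}$ follows. Note that this step genuinely uses the vanishing of $D_{n+1}(a_{0},\dots,a_{n},z)$ as well as of $D_{n+1}(a_{0},\dots,a_{n},y)$ --- available from \eqref{eq3.2} but never invoked in your Stage~3. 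With that identity inserted, the proof is complete.
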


 The proof of Theorem~\ref{Blumenthal} is a straightforward application of theorems 41.1
 and 42.1 and of Lemma 42.1 from \cite{Bl} to the standart form of Blumenthal's
 embedding  theorem, see Theorem 38.1 in \cite{Bl}, and we omit it here.

\begin{theorem}\label{main.Blum.}
Let $(X,d)$ be a metric space with a marked point $p.$ If there are a \textbf{tangent}
space $\Omega_{p,\tilde r}^{X}$ and a natural number $n$ such that $\Omega_{p,\tilde
r}^{X}$ is isometrically embeddable in $E^{n}$ but there are not isometric embeddings of
this $\Omega_{p,\tilde r}^{X}$ in $E^{l}$ with $l<n,$ then there exist some sequences
$$\tilde x^{i}=\{x_{m}^{i}\}_{m\in\mathbb N}\in\tilde X, \quad i=0,1,...,n,$$
having the following properties:
\item[\rm(i)]\textit{The limit relations \begin{equation}\label{eq.3.4} \lim_{m \to\infty}x_{m}^{0}=\lim_{m
\to\infty}x_{m}^{1}=...=\lim_{m \to\infty}x_{m}^{n}=p\end{equation} and
\begin{equation}\label{eq.3.5} \mathop{\land}\limits_{k=1} ^
{n}\liminf_{m\to\infty}\Theta_{k+1}(x_{m}^{0}, x_{m}^{1},..., x_{m}^{k})>0\end{equation}
hold;}
\item[\rm(ii)]\textit{The equalities \begin{equation}\label{eq.3.6} \lim_{m\to\infty}\Theta_{n+2}(x_{m}^{0}, x_{m}^{1},..., x_{m}^{n},y_m)=0\end{equation} and  \begin{equation}\label{eq.3.7} \lim_{m\to\infty}\Theta_{n+3}(x_{m}^{0}, x_{m}^{1},..., x_{m}^{n},y_m,u_m)=0\end{equation}
hold for $\tilde y=\{y_m\}_{m\in\mathbb N}\in\tilde X, \tilde u=\{u_m\}_{m\in\mathbb
N}\in\tilde X$ if \begin{equation}\label{eq.3.8} \lim_{m \to\infty}u_{m}=\lim_{m
\to\infty}y_{m}=p.\end{equation}}Conversely, suppose that there are $\tilde x^{0},
...,\tilde x^{n}\in \tilde X $ having properties (i)-(ii), then there is a
\textbf{pretangent} space $\Omega_{p,\tilde r}^{X}$ which is isometrically embeddable in
$E^{n}$ but there are not  isometric embeddings of this $\Omega_{p,\tilde r}^{X}$ in
$E^{l}$ with $l<n.$
\end{theorem}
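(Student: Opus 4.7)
The plan is to split the theorem into its two directions and translate, in each direction, between points of a pretangent space (where Theorem \ref{Blumenthal} applies) and representative sequences in $\tilde X$ (where (i)--(ii) are stated).

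\textbf{Forward direction.} Apply Theorem \ref{Blumenthal} to $\Omega_{p,\tilde r}^X$ to obtain points $\alpha_0,\ldots,\alpha_n\in\Omega_{p,\tilde r}^X$ with $(-1)^{k+1}D_k(\alpha_0,\ldots,\alpha_k)>0$ for $k=1,\ldots,n$ and $D_{n+1}(\alpha_0,\ldots,\alpha_n,\beta)=D_{n+2}(\alpha_0,\ldots,\alpha_n,\beta,\gamma)=0$ for all $\beta,\gamma\in\Omega_{p,\tilde r}^X$. Pick representatives $\tilde x^i=\{x_m^i\}\in\alpha_i\subset\tilde X_{p,\tilde r}$. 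Relation \eqref{eq.3.4} is then \eqref{eq1.1*} applied to each $\tilde x^i$. For \eqref{eq.3.5}, mutual stability of the $\tilde x^i$ with $\tilde p$, together with the degree-$2k$ homogeneity and continuity of $D_k$, give
\[
\Theta_{k+1}(x_m^0,\ldots,x_m^k)\;\longrightarrow\;\frac{(-1)^{k+1}D_k(\alpha_0,\ldots,\alpha_k)}{(\max_{0\le i\le k}\rho(\alpha,\alpha_i))^{2k}}\;>\;0,
\]
where $\alpha=\pi(\tilde p)$; the denominator is positive because the Blumenthal sign condition forces at least one $\alpha_i\ne\alpha$.

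\textbf{Verification of (ii) -- the hard step.} For arbitrary $\tilde y,\tilde u\in\tilde X$ with $y_m,u_m\to p$ I argue \eqref{eq.3.6} by contradiction (\eqref{eq.3.7} is analogous). Suppose some subsequence $\{m_k\}$ has $|\Theta_{n+2}(x_{m_k}^0,\ldots,x_{m_k}^n,y_{m_k})|\ge\varepsilon>0$. In the \emph{degenerate} sub-case where $d(y_{m_k},p)/r_{m_k}\to\infty$ along a further subsequence, rescaling the tuple by $d(y_{m_k},p)$ sends every $x_{m_k}^i$ to $p$ while leaving $y_{m_k}$ at unit distance; $n+1$ of the $n+2$ points coincide in the limit, making $n+1$ rows of the Cayley-Menger matrix identical, so $D_{n+1}\to 0$ and hence $\Theta_{n+2}\to 0$ -- contradiction. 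In the \emph{bounded} sub-case, separability of $\Omega_{p,\tilde r}^X$ (which holds since it embeds in $E^n$) furnishes a countable dense subset $\{\beta_j\}$ with representatives $\tilde z^j\in\beta_j$; a standard diagonal extraction produces a further subsequence $\tilde r'$ along which $d(y_{m_k},z^j_{m_k})/r_{m_k}$ converges for every $j$. Density and the triangle inequality propagate this to every $\tilde w\in\tilde X_{p,\tilde r}$, so $\tilde y'$ is mutually stable w.r.t.\ $\tilde r'$ with every element of $\{\tilde x':\tilde x\in\tilde X_{p,\tilde r}\}$. By Remark \ref{rem 1.1} -- the only place where tangency is used -- that family is itself maximal self-stable, so $\tilde y'=\tilde x_0'$ for some $\tilde x_0\in\tilde X_{p,\tilde r}$. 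Setting $\beta:=\pi(\tilde x_0)$, the Blumenthal identity $D_{n+1}(\alpha_0,\ldots,\alpha_n,\beta)=0$ inside $\Omega_{p,\tilde r}^X$, combined with the continuity/homogeneity computation of the forward direction, forces $\Theta_{n+2}\to 0$ along $\{m_k\}$ -- again a contradiction.

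\textbf{Converse direction.} Put $r_m:=\delta(x_m^0,\ldots,x_m^n)$; by (i) this is eventually positive and tends to $0$. Pass to a subsequence along which every $d(x_m^i,x_m^j)/r_m$ and $d(x_m^i,p)/r_m$ converges, then extend $\{\tilde p,\tilde x^0,\ldots,\tilde x^n\}$ by Zorn's lemma to a maximal self-stable family $\tilde X_{p,\tilde r}$ with pretangent space $\Omega_{p,\tilde r}^X$. Writing $\alpha_i:=\pi(\tilde x^i)$, \eqref{eq.3.5} converts via the same continuity computation into the strict inequalities $(-1)^{k+1}D_k(\alpha_0,\ldots,\alpha_k)>0$ for $k=1,\ldots,n$. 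For any $\beta,\gamma\in\Omega_{p,\tilde r}^X$ with representatives $\tilde y,\tilde u\in\tilde X_{p,\tilde r}$, relation \eqref{eq1.1*} gives $y_m,u_m\to p$, so hypotheses \eqref{eq.3.6}--\eqref{eq.3.7} force $D_{n+1}(\alpha_0,\ldots,\alpha_n,\beta)=D_{n+2}(\alpha_0,\ldots,\alpha_n,\beta,\gamma)=0$. Theorem \ref{Blumenthal} then delivers an isometric embedding of $\Omega_{p,\tilde r}^X$ in $E^n$ together with the non-existence of isometric embeddings in any $E^l$ with $l<n$. The Transfer Principle is not directly applicable here because \eqref{eq.3.6}--\eqref{eq.3.7} concern the \emph{fixed} tuple $(\tilde x^0,\ldots,\tilde x^n)$ rather than a free $\liminf$ over all tuples, and it is precisely this point that explains why the bounded sub-case in (ii) is the main technical obstacle.
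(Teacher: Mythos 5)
Your proof is correct and follows essentially the same route as the paper's: Theorem \ref{Blumenthal} applied to the (pre)tangent space, the dichotomy on whether $d(y_m,p)/r_m$ stays bounded, tangency used only through Remark \ref{rem 1.1} to realize $\tilde y'$ inside the maximal self-stable family, and the identical-rows degeneration of the Cayley--Menger determinant in the unbounded case. The only cosmetic differences are that you re-derive the paper's Lemma \ref{lem3.3} (which it cites from \cite{DAKC}) via separability and a diagonal argument instead of quoting it, and that you compress the paper's three-case analysis $(i_1)$--$(i_3)$ for \eqref{eq.3.7} into an ``analogous'' remark --- defensible, since the same two-identical-rows mechanism disposes of every case in which at least one of $\tilde y,\tilde u$ escapes the scale $r_m$.
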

Recall that the functions $\Theta_{k}$ were defined by \eqref{Teta}.
\begin {lemma}\label{lem3.3} Let $(X,d)$ be a metric space with a marked point p, $\mathfrak{B}$  a countable subfamily of $\tilde X, \tilde r=\{r_n\}_{n\in\mathbb N}$ a normalizing sequence and let $\tilde X_{p,\tilde r}$ be a maximal self-stable family.
Suppose that the inequality \begin{equation}\label{eq3.9}
\limsup_{n\to\infty}\frac{d(y_n,p)}{r_n}<\infty\end{equation} holds for every $\tilde
y=\{y_n\}_{n\in\mathbb N}\in\mathfrak{B}$ and that a pretangent space $\Omega_{p,\tilde
r}^{X}=\pi(\tilde X_{p,\tilde r})$ is separable and tangent.Then there is a strictly
increasing, infinite sequence $\{n_k\}_{k\in\mathbb N}$ of natural numbers such that for
every $\tilde y=\{y_n\}_{n\in\mathbb N}\in\mathfrak{B}$ there exists $\tilde
z=\{z_n\}_{n\in\mathbb N}\in\tilde X_{p,\tilde r}$ with $\tilde z^{'}=\tilde y^{'},$
i.e., the equality \begin{equation}\label{e3.10} z_{n_{k}}=y_{n_{k}}\end{equation} holds
for all $k\in\mathbb N.$
\end{lemma}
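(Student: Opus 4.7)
The plan is to combine the separability of $\Omega_{p,\tilde r}^X$ with a diagonal extraction and then invoke tangency plus maximality. First, I would use separability to choose a countable family $\{\tilde x^{(i)}\}_{i\in\mathbb N}\subseteq \tilde X_{p,\tilde r}$ whose classes $\pi(\tilde x^{(i)})$ form a dense set in $\Omega_{p,\tilde r}^X,$ and enumerate $\mathfrak{B}=\{\tilde y^{(j)}\}_{j\in\mathbb N}.$

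Next I would observe that for every pair $(i,j)$ the ratios $d(y_n^{(j)},x_n^{(i)})/r_n$ are bounded above: by the triangle inequality they are controlled by $d(y_n^{(j)},p)/r_n+d(p,x_n^{(i)})/r_n,$ the first term being bounded thanks to \eqref{eq3.9} and the second convergent since $\tilde x^{(i)},\tilde p\in\tilde X_{p,\tilde r}.$ A standard diagonal extraction through $\mathbb N\times\mathbb N$ then produces one strictly increasing sequence $\{n_k\}_{k\in\mathbb N}$ along which $\lim_{k\to\infty}d(y_{n_k}^{(j)},x_{n_k}^{(i)})/r_{n_k}$ exists for every pair $(i,j).$

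The third, and most delicate, step is to upgrade mutual stability from the chosen dense subfamily to the whole of $\tilde X_{p,\tilde r}.$ Fix $j$ and an arbitrary $\tilde x\in\tilde X_{p,\tilde r};$ by density pick $\tilde x^{(i_s)}$ with $\tilde d(\tilde x,\tilde x^{(i_s)})\to 0$ as $s\to\infty.$ An $\varepsilon/3$ estimate based on
\begin{equation*}
\left|\frac{d(y_{n_k}^{(j)},x_{n_k})}{r_{n_k}}-\frac{d(y_{n_k}^{(j)},x_{n_k}^{(i_s)})}{r_{n_k}}\right|\le\frac{d(x_{n_k},x_{n_k}^{(i_s)})}{r_{n_k}},
\end{equation*}
together with the convergence $d(x_{n_k},x_{n_k}^{(i_s)})/r_{n_k}\to\tilde d(\tilde x,\tilde x^{(i_s)}),$ shows that $\lim_{k\to\infty}d(y_{n_k}^{(j)},x_{n_k})/r_{n_k}$ exists. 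Hence $\tilde y^{(j)\prime}$ is mutually stable w.r.t.\ $\tilde r^{\prime}$ with every restricted $\tilde x^{\prime}.$

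Finally, I would invoke tangency via Remark~\ref{rem 1.1}: the restricted family $\{\tilde x^{\prime}:\tilde x\in\tilde X_{p,\tilde r}\}$ is already maximal self-stable w.r.t.\ $\tilde r^{\prime}.$ Since $\tilde y^{(j)\prime}$ is mutually stable with every member of this family, its maximality forces $\tilde y^{(j)\prime}$ itself to belong to it, i.e., $\tilde y^{(j)\prime}=\tilde z^{\prime}$ for some $\tilde z\in\tilde X_{p,\tilde r},$ which gives the desired equality $z_{n_k}=y_{n_k}^{(j)}$ for all $k.$ The main obstacle will be the passage from the countable dense subfamily to the (possibly uncountable) full family $\tilde X_{p,\tilde r};$ this is precisely where the joint use of separability (to control a countable skeleton by a single diagonal) and tangency (to guarantee that no new elements must be added when restricting to $\tilde r^{\prime}$) is essential.
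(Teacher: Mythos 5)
Your argument is correct: the diagonal extraction over the countable set of pairs (dense skeleton, element of $\mathfrak{B}$), the $\varepsilon/3$ upgrade to all of $\tilde X_{p,\tilde r}$ via separability, and the final appeal to tangency through Remark~\ref{rem 1.1} together with the maximality of $\tilde X_{p,\tilde r'}$ are exactly the ingredients needed, and each hypothesis (\eqref{eq3.9} for boundedness, separability for the countable skeleton, tangency for maximality of the restricted family) is used where it is essential. The paper itself omits the proof and refers to Proposition~3 of \cite{DAKC}, whose argument proceeds along the same lines, so your proposal matches the intended proof.
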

For the proof see Proposition 3 in \cite{DAKC}.

\begin{proof}[Proof of Theorem~\ref{main.Blum.}.] Suppose that there are a tangent space
$\Omega_{p,\tilde r}^{X}$ and natural $n$ such that $\Omega_{p,\tilde r}^{X}$ is
isometrically embeddable in $E^{n}$ but there not isometric embeddings of
$\Omega_{p,\tilde r}^{X}$ in $E^{l}$ with $l<m.$ By Theorem~\ref{Blumenthal} the metric
space $\Omega_{p,\tilde r}^{X}$ contains some points $\beta_0, \beta_1,..,\beta_n$ such
that
\begin{equation}\label{e3.11}(-1)^{k+1}D_{k}(\beta_0,...,\beta_k)>0\end{equation} for
$k=1,...,n$ and
\begin{equation}\label{e3.12}D_{n+1}(\beta_0,
\beta_1,..,\beta_n,\gamma)=D_{n+2}(\beta_0, \beta_1,..,\beta_n,\gamma,
v)=0\end{equation}for all $\gamma, v \in \Omega_{p,\tilde r}^{X}.$ Let $\tilde X_{p,
\tilde r}$ be a maximal self-stable family corresponding $\Omega_{p,\tilde r}^{X}$ and
let $\tilde x^{i}=\{x_{m}^{i}\}_{m\in\mathbb N},$ $i=0,..,n$ be elements of $\tilde
X_{p,\tilde r}$ such that $\pi(\tilde x^{i})=\beta_{i}, i=0,...,n,$ where $\pi$ is the
natural projection. We claim that these $\tilde x^{0}, ..., \tilde x^{n}$ have
properties (i) and (ii).

To prove it note firstly that \eqref{eq.3.4} follows from \eqref{eq1.1*}. Moreover we
have the equality
\begin{equation*}\lim_{m\to\infty}\frac{1}{r_m}\left(\mathop{\lor}\limits_{i=0} ^ {k}d(x_{m}^{i},p)\right) =\mathop{\lor}\limits_{i=0} ^ {k}\rho(\alpha, \beta_i)\end{equation*}
for $k=1,...,n$ where $\alpha=\pi(\tilde p).$ This equality and \eqref{Teta} imply
\begin{equation*}\mathop{\land}\limits_{k=1} ^ {n}\liminf_{m\to\infty}\Theta_{k+1}(x_{m}^{0},...,x_{m}^{k})
=\mathop{\land}\limits_{k=1} ^ {n}\left(\frac{1}{\left (\mathop{\lor}\limits_{i=0} ^
{k}\rho(\alpha, b_i)\right)^{2k}}
\liminf_{m\to\infty}\frac{(-1)^{k+1}D_{k}(x_{m}^{0},...,x_{m}^{k})}{r_{m}^{2k}}\right)\end{equation*}
\begin{equation}\label{e3.13}=\mathop{\land}\limits_{k=1} ^ {n}\left(\frac{1}{\left (\mathop{\lor}\limits_{i=0} ^ {k}\rho(\alpha, b_i)\right)^{2k}} (-1)^{k+1}D_{k}(\beta_0,...,\beta_k)\right).\end{equation}
It should be pointed here that
\begin{equation}\label{e3.14} \mathop{\lor}\limits_{i=0} ^ {k}\rho(\alpha, \beta_{i})>0\end{equation}
for $k=1,..,n.$ Indeed in the opposite case we have
$\alpha=\beta_{0}=\beta_{1}=...=\beta_{k}$ that implies $D_{k}(\beta_0,
\beta_1,...,\beta_k)=0$ for $k=1,...,n,$ contrary to \eqref{e3.11}. Now using
\eqref{e3.11}, \eqref{e3.13} and \eqref{e3.14} we obtain \eqref{eq.3.5}.

Let us prove property (ii). Let $\tilde y=\{y_m\}_{m\in\mathbb N}\in\tilde X$ be a
sequence such that $$\lim_{m\to\infty}y_{m}=p$$ and let $c$ be a limit point of the
sequence $\{\Theta_{n+2}(x_{m}^{0}, x_{m}^{1},...,x_{m}^{n},y_{m})\}_{m\in\mathbb N},$
i.e.,

\begin{equation} \label{e3.15}
\lim_{k\to\infty}\Theta_{n+2}(x_{m_k}^{0},x_{m_k}^{1},...,x_{m_k}^{n},y_{m_k})=c\end{equation}for
some sequence $\{m_k\}_{k\in\mathbb N}.$ We must prove $c=0.$

Inequalities \eqref{e2.9*} imply that the function $\Theta_{n+2}$ is bounded from above
and from below. Consequently $c$ is finite. For convenience we write
$x_{k}^{1,0}=x_{m_k}^{0},...,x_{k}^{1,n}=x_{m_k}^{n}, \, y_{k}^{1}=y_{m_k}$ and
$r_{k}^{1}=r_{m_k}$ so that we have
\begin{equation}\label{e3.16} \lim_{k\to\infty}\Theta_{n+2}(x_{k}^{1,0},x_{k}^{1,1},...,x_{k}^{1,n}, y_{k}^{1})=c. \end{equation}
Note also that the space $\Omega_{p,\tilde r}^{X}$ is tangent by the condition of the
theorem and separable as an isometrically embeddable in $E^{n}$ space. Furthermore,
according to Remark~\ref{rem 1.1} the family $\tilde X_{p, \tilde r'}=in_{\tilde
r'}(\tilde X_{p, \tilde r}),$ see \eqref{eq1.3}, is maximal self-stable w. r. t. the
normilizing sequence $\tilde r'=\{r_{m_k}\}_{k\in\mathbb N},$ so that we can use
Lemma~\ref{lem3.3}. If the inequality
\begin{equation}\label{e3.16*}
\limsup_{k\to\infty}\frac{d(y_{k}^{1},p)}{r_{k}^{1}}<\infty
\end{equation}
holds, then using this lemma with $\mathfrak{B}$ consisting of the unique element
$\{y_{k}^{1}\}_{k\in\mathbb N}$ we can find $\{z_{k}^{1}\}_{k\in\mathbb N}\in\tilde
X_{p, \tilde r'}$ and strictly increasing infinite sequence $\{k_j\}_{j\in\mathbb N}$ of
natural numbers such that $$y_{k_j}^{1}=z_{k_j}^{1}$$ for all $j\in\mathbb N.$ Using
Remark~\ref{rem 1.1} we see that there is $\tilde z=\{z_m\}_{m\in\mathbb N}\in \tilde
X_{p, \tilde r}$ such that $$z_{m_k}=z_{k}^{1}$$ for all $k\in\mathbb N.$ Write $\gamma
= \pi (\tilde z).$ Similarly \eqref{e3.13} we have
\begin{equation*}c=\lim_{k\to\infty}\Theta_{n+2}(x_{k}^{1,0},x_{k}^{1,1},...,x_{k}^{1,n},y_{k}^{1})=\lim_{j\to\infty}\Theta_{n+2}(x_{k_j}^{1,0},x_{k_j}^{1,1},...,x_{k_j}^{1,n}, z_{k_j}^{1}) =\end{equation*}
\begin{equation}\label{e3.17}\lim_{m\to\infty}\Theta_{n+2}(x_{m}^{0},...,x_{m}^{n},z_{m})=\frac{1}{\left(\left(\mathop{\lor}\limits_{i=0} ^ {n}\rho(\alpha, \beta_{i}) \right)\lor\left(\rho(\alpha,\gamma)\right)\right)^{2(n+1)}} (-1)^{n+2}D_{n+1}(\beta_{0},...,\beta_{n},\gamma). \end{equation}
It follows from \eqref{e3.12} and \eqref{e3.14} that
\begin{equation*}0=\mathrm{sign} D_{n+1}(\beta_{0},...,\beta_{n},\gamma)=(-1)^{(n+2)}\mathrm{sign}\, c. \end{equation*}
Thus $c=0$ if \eqref{e3.16*} holds. Suppose contrary that
\begin{equation*}\limsup_{k\to\infty}\frac{d(y_{k}^{1},p)}{r_{k}^{1}}=\infty.\end{equation*}
Let $\tilde y^{1'}=\{y_{k_j}^{1}\}_{k\in\mathbb N}$ be a subsequence of
$\{y_{k}^{1}\}_{k\in\mathbb N}$ such that
\begin{equation}\label{e3.18}\lim_{j\to\infty}\frac{d(y_{k_j}^{1},p)}{r_{k_j}^{1}}=\infty.\end{equation}
In this case we have
\begin{equation}\label{e3.19}\left(\mathop{\lor}\limits_{i=0} ^ {n}d(x_{k_{j}}^{1,i}, p) \right)\lor\left(d(y_{k_{j}}^{1},p)\right)=d(y_{k_{j}}^{1},p)\end{equation}
for all sufficiently large $j.$ In addition, \eqref{e3.18} and \eqref{e3.19} imply the
limit relations
\begin{equation}\label{e3.20}\lim_{j\to\infty}\frac{d(x_{k_{j}}^{1,s}, x_{k_{j}}^{1,t})}{\left(\mathop{\lor}\limits_{i=0} ^ {n}d(x_{k_{j}}^{1,i}, p) \right)\lor\left(d(y_{k_{j}}^{1},p)\right)}=0,\end{equation}
\begin{equation}\label{e3.21}\lim_{j\to\infty}\frac{d(x_{k_{j}}^{1,t}, y_{k_j}^{1})}{\left(\mathop{\lor}\limits_{i=0} ^ {n}d(x_{k_{j}}^{1,i}, p) \right)\lor\left(d(y_{k_{j}}^{1},p)\right)}=1\end{equation}
for all $s,t\in\{1,2,...,n\}.$ Consequently we have
\begin{equation}\label{e3.22}\lim_{j\to\infty}\Theta_{n+2}(x_{k_j}^{1,0},x_{k_j}^{1,1},...,x_{k_j}^{1,n}, y_{k_j}^{1})
=(-1)^{n+2}\begin{vmatrix}
    0&1&1&...&1&1\\
    1&0&0&...&0&1\\
    1&0&0&...&0&1\\
    \vdots&\vdots&\vdots&...&\vdots&\vdots\\
    1&0&0&...&0&1\\
    1&1&1&...&1&0\\
    \end{vmatrix}.\end{equation}
The second row of this determinant coincides with the third one, thus the determinant is
zero. Hence in \eqref{e3.15} we have $c=0.$

Let us turn to equality \eqref{eq.3.7}. Consider, as in \eqref{e3.16}, two sequences
$\tilde y=\{y_{m}\}_{m\in\mathbb N}$ and $\tilde u=\{u_{m}\}_{m\in\mathbb N}$ such that
\begin{equation*} p=\lim_{m\to\infty}y_{m}=\lim_{m\to\infty}u_{m}\end{equation*}and
\begin{equation}\label{e3.23}\lim_{k\to\infty}\Theta_{n+3}(x_{k}^{1,0},x_{k}^{1,1},..,x_{k}^{1,n},y_{k}^{1},u_{k}^{1})=c\end{equation}
where the constant $c$ is an arbitrary limit number of the sequence
$$\{\Theta_{n+3}(x_{m}^{0},x_{m}^{1},...,x_{m}^{n},y_{m},u_{m})\}_{m\in\mathbb N}.$$ As in
the prove of equality \eqref{eq.3.6} we want to use Lemma~\ref{lem3.3} for the
demonstration of equality $c=0.$ In accordance with this lemma it is relevant to
consider three possible cases:

\begin{equation*}(i_1) \qquad \limsup_{k\to\infty}\frac{d(y_{k}^{1},p)}{r_{k}^{1}}<\infty \quad \mbox{and} \quad
\limsup_{k\to\infty}\frac{d(u_{k}^{1},p)}{r_{k}^{1}}<\infty ;\end{equation*}
\begin{equation*}(i_2) \qquad \limsup_{k\to\infty}\frac{d(y_{k}^{1},p)}{r_{k}^{1}}<\infty \quad \mbox{and} \quad
\limsup_{k\to\infty}\frac{d(u_{k}^{1},p)}{r_{k}^{1}}=\infty \end{equation*} \qquad
\qquad \qquad \qquad \quad or
\begin{equation*} \, \, \, \quad \qquad \limsup_{k\to\infty}\frac{d(y_{k}^{1},p)}{r_{k}^{1}}=\infty \quad \mbox{and} \quad
\limsup_{k\to\infty}\frac{d(u_{k}^{1},p)}{r_{k}^{1}}<\infty ;\end{equation*}
\begin{equation*}(i_3) \qquad \limsup_{k\to\infty}\frac{d(y_{k}^{1},p)}{r_{k}^{1}}=\infty \quad \mbox{and} \quad
\limsup_{k\to\infty}\frac{d(u_{k}^{1},p)}{r_{k}^{1}}=\infty. \end{equation*}
 Reasoning as in the proofs of \eqref{e3.17} and \eqref{e3.22} we can show that $c=0$ if $(i_1)$
 or $(i_2)$ holds. Thus it is sufficient to
 consider only case $(i_3).$ Passing to the subsequence we may suppose that
\begin{equation}\label{e3.24} \lim_{j\to\infty}\frac{d(y_{k_{j}}^{1},p)}{r_{k_{j}}^{1}}=
\lim_{j\to\infty}\frac{d(u_{k_{j}}^{1},p)}{r_{k_{j}}^{1}}=\infty. \end{equation} Indeed,
if there is not a subsequence for which \eqref{e3.24} holds, then we can reduce the
situation to cases $(i_1)$ or $(i_2)$ which were considered above. In addition to
\eqref{e3.24} we may assume that there are $k_1, k_2 \in(0,\infty)$ such that
\begin{equation}\label{e3.25} \lim_{j\to\infty}\frac{d(y_{k_{j}}^{1},p)}{d(u_{k_{j}}^{1},p)}
=k_1 \end{equation} and
\begin{equation}\label{e3.26}
\lim_{j\to\infty}\frac{d(y_{k_{j}}^{1},u_{k_{j}}^{1})}{d(u_{k_{j}}^{1},p)}=k_2
\end{equation}
because if
\begin{equation} \lim_{j\to\infty}\frac{d(y_{k_{j}}^{1},p)}{d(u_{k_{j}}^{1},p)}=0 \quad \mbox{or}
\quad \lim_{j\to\infty}\frac{d(y_{k_{j}}^{1},p)}{d(u_{k_{j}}^{1},p)}=\infty,
\end{equation}
then, similarly \eqref{e3.22}, we find
\begin{equation*} \lim_{k\to\infty}\Theta_{n+3}(x_{k}^{1,0},x_{k}^{1,1},...,x_{k}^{1,n},y_{k}^{1},u_{k}^{1})
=(-1)^{n+3}\begin{vmatrix}
    0&1&1&...&1&1\\
    1&0&0&...&0&1\\
    1&0&0&...&0&1\\
    \vdots&\vdots&\vdots&...&\vdots&\vdots\\
    1&0&0&...&0&1\\
    1&1&1&...&1&0\\
    \end{vmatrix}=0
\end{equation*}
or, respectively,
\begin{equation*} \lim_{k\to\infty}\Theta_{n+3}(x_{k}^{1,0},x_{k}^{1,1},...,x_{k}^{1,n},y_{k}^{1},u_{k}^{1})
=(-1)^{n+3}\begin{vmatrix}
    0&1&1&...&1&1\\
    1&0&0&...&0&1\\
    1&0&0&...&0&1\\
    \vdots&\vdots&\vdots&...&\vdots&\vdots\\
    1&1&1&...&1&0\\
    1&0&0&...&0&1\\
    \end{vmatrix}=0.
\end{equation*}
Limit relations \eqref{e3.24}, \eqref{e3.25} and \eqref{e3.26} imply that the quantity
$$(-1)^{n+3}\lim_{k\to\infty}\Theta_{n+3}(x_{k}^{1,0},x_{k}^{1,1},...,x_{k}^{1,n},y_{k}^{1},u_{k}^{1})$$
equals the determinant of the matrix with the second and third rows of the form
$$(1,0,0,...,0,k_1,1) \quad \mbox{if} \quad k_{1}<1 \quad \mbox{or} \quad(1,0,0,...,0,1,k_1)\quad \mbox{if}
\quad k_{1}\ge 1.$$ Consequently the equality
\begin{equation*}\lim_{k\to\infty}\Theta_{n+3}(x_{k}^{1,0},x_{k}^{1,1},...,x_{k}^{1,n},y_{k}^{1},u_{k}^{1})=c=0 \end{equation*}
holds in all possible cases and \eqref{eq.3.7} follows.

Suppose now that there exist some sequences $\tilde x^{i},\, i=0,...,n,$ with the
properties (i) and (ii). Limit relations \eqref{eq.3.4} imply that the quantities
$$r_{m}:=\mathop{\lor}\limits_{i=0} ^ {n}d(x_{m}^{i},p)$$ become vanishingly small with
$m\to\infty.$ Consequently we can consider $\tilde r=\{r_m\}_{m\in\mathbb N}$ as a
normilizing sequence. As in the proof of the Theorem~\ref{Embed}, going to subsequence
we can assume all $\tilde x^{i}$  and $\tilde p$ to be mutually stable. Let $\tilde
X_{p,\tilde r}$ be a maximal self-stable family such that $\tilde x^{i}\in\tilde
X_{p,\tilde r}$ for $i=0,...,n$ and let $\Omega_{p,\tilde r}^{X}$ be the metric
identification of $\tilde X_{p,\tilde r}.$ Write $$\alpha_{0}=\pi(\tilde x^{0}),
\alpha_{1}=\pi(\tilde x^{1}), ..., \alpha_{n}=\pi(\tilde x^{n})$$ where $\pi$ is the
natural projection of $\tilde X_{p,\tilde r}$ on $\Omega_{p,\tilde r}^{X}.$ Going to the
limit under $m\to\infty$ and using \eqref{eq.3.5} we obtain
\begin{equation}\label{e3.28}
\mathop{\land}\limits_{k=1}^{n}\liminf_{m\to\infty}\Theta_{k+1}(x_{m}^{0},
x_{m}^{1},...,x_{m}^{k})=\mathop{\land}\limits_{k=1}^{n}(-1)^{k+1}D_{k}(\alpha_{0},\alpha_{1},...,
\alpha_{k} )>0.
\end{equation}
Similarly for all $\beta, \gamma \in \Omega_{p,\tilde r}^{X}$ property (ii) implies that
\begin{equation*}
\lim_{m\to\infty}\Theta_{n+2}(x_{m}^{0},x_{m}^{1},...,x_{m}^{n},y_{m})=\lim_{m\to\infty}\Theta_{n+3}(x_{m}^{0},x_{m}^{1},...,x_{m}^{n},y_{m},u_{m})=
\end{equation*}
\begin{equation}\label{e3.29}
(-1)^{n+2}D(\alpha_{0},\alpha_{1},...,\alpha_{n},\beta)=(-1)^{n+3}D_{n+2}(\alpha_{0},\alpha_{1},...,
\alpha_{n},\beta,\gamma)=0,
\end{equation}
where $\{y_m\}_{m\in\mathbb N}\in \tilde X_{p,\tilde r}$ and $\{u_m\}_{m\in\mathbb N}\in
\tilde X_{p,\tilde r}$ such that $\pi(\{y_m\}_{m\in\mathbb N})=\beta$ and
$\pi(\{u_m\}_{m\in\mathbb N})=~\gamma.$ Hence by Theorem~\ref{Blumenthal} the pretangent
space $\Omega_{p,\tilde r}^{X}$ has an isometric embedding in $E^{n}$ but there are not
isometric embeddings of $\Omega_{p,\tilde r}^{X}$ in $E^{l}$ with $l<n,$ as required.
\end{proof}


\medskip
\noindent
{\bf Viktoriia Bilet} \hspace{5cm} {\bf Oleksiy Dovgoshey}\\
IAMM of NASU, Donetsk \hspace{3.4cm} IAMM of NASU, Donetsk \\
E-mail: biletvictoriya@mail.ru \hspace{2.6 cm} E-mail: aleksdov@mail.ru  \\
\end{document}